\documentclass[12pt]{article}

\usepackage{amsmath,amsthm,amssymb}
\allowdisplaybreaks
\usepackage{mathrsfs}
\usepackage{cite}

\textwidth15.5cm
\textheight21cm
\oddsidemargin0cm
\evensidemargin0cm

\usepackage[pdftex,bookmarksopen=true,bookmarks=true,unicode,setpagesize]{hyperref}
\hypersetup{colorlinks=true,linkcolor=black,citecolor=black}

\newtheorem{theorem}{Theorem}
\newtheorem{corollary}[theorem]{Corollary}
\newtheorem{lemma}[theorem]{Lemma}
\newtheorem{proposition}[theorem]{Proposition}

\theoremstyle{remark}

\theoremstyle{remark}

\theoremstyle{remark}
\newtheorem{remark}[theorem]{Remark}

\newcommand{\R}{\mathbb R}

\begin{document}

\vspace{-20mm}
\begin{center}{\Large \bf
Approximation of a free Poisson process by systems of freely independent particles
}
\end{center}

{\large Marek Bo\.zejko}\\
Instytut Matematyczny, Uniwersytet Wroc{\l}awski, Pl.\ Grunwaldzki 2/4, 50-384 Wroc{\l}aw, Poland; e-mail: \texttt{bozejko@math.uni.wroc.pl}\vspace{2mm}

{\large Jos\'e  Lu\'is da Silva}\\
Universidade da Madeira, Edif\'icio da Penteada, Caminho da Penteada, 9020-105, Funchal, Madeira, Portugal; e-mail: \texttt{luis@uma.pt}\vspace{2mm}

{\large Tobias Kuna}\\ University of Reading,
Department of  Mathematics,
Whiteknights,
PO Box 220,
Reading RG6 6AX, U.K.\\
 e-mail:
\texttt{t.kuna@reading.ac.uk}\vspace{2mm}

{\large Eugene Lytvynov}\\ Department of Mathematics,
Swansea University, Singleton Park, Swansea SA2 8PP, U.K.;
e-mail: \texttt{e.lytvynov@swansea.ac.uk}\vspace{2mm}


{\small

\begin{center}
{\bf Abstract}
\end{center}
\noindent 
Let $\sigma$ be a non-atomic, infinite Radon measure on $\R^d$, for example, $d\sigma(x)=z\,dx$ where $z>0$. 
We consider a system of  freely independent particles $x_1,\dots,x_N$ in a bounded set $\Lambda\subset\R^d$, where 
each particle $x_i$ has distribution $\frac1{\sigma(\Lambda)}\,\sigma$ on $\Lambda$ and the number of particles, $N$, is random and has Poisson distribution with parameter $\sigma(\Lambda)$. If the particles were classically independent rather than freely independent, this particle system would be the restriction to $\Lambda$ of the Poisson point process on $\R^d$ with intensity measure $\sigma$. In the case of free independence, this particle system is not the restriction of the free Poisson process on $\R^d$ with intensity measure $\sigma$. Nevertheless, we prove that this is true in an approximative sense: if bounded sets $\Lambda^{(n)}$ ($n\in\mathbb N$) are such that $\Lambda^{(1)}\subset\Lambda^{(2)}\subset\Lambda^{(3)}\subset\dotsm$ and $\bigcup_{n=1}^\infty \Lambda^{(n)}=\R^d$,  then the corresponding particle system in $\Lambda^{(n)}$ converges (as $n\to\infty$) to the free Poisson process on $\R^d$ with intensity measure $\sigma$. We also prove the following $N/V$-limit: Let $N^{(n)}$ be a determinstic sequence of natural numbers such that $\lim_{n\to\infty}N^{(n)}/\sigma(\Lambda^{(n)})=1$. 
Then the system of $N^{(n)}$ freely independent particles in $\Lambda^{(n)}$ converges 
(as $n\to\infty$) to the free Poisson process. We finally extend these results to the case of a free L\'evy white noise  (in particular, a free L\'evy process) without free Gaussian part.
 } \vspace{2mm}

 {\bf Keywords:} Completely random measure; freely independent particle systems; free Poisson process; free L\'evy process; $N/V$-limits.\vspace{2mm}

{\bf 2010 MSC:} 46L54, 60G20, 60G51, 60G55, 60G57, 82B21.

\section{Introduction}
Let $\Gamma(\R^d)$ denote the configuration space of an (infinite) system of identical particles in $\R^d$. By definition, $\Gamma(\R^d)$ is the collection of all sets $\gamma\subset\R^d$ which are locally finite. A probability measure on $\Gamma(\R^d)$ is called a point process, see e.g.\ \cite{Kallenberg}. In statistical mechanics, a point process describes the state of a system of particles in the continuum. Although every realistic system has only a finite number of particles, many important physical phenomena can only be described by infinite particle systems, see e.g.\ \cite{Ruelle,DSS}.

An infinite system of particles without interaction is described by a Poisson point process.
Let $\sigma$ be a non-atomic, infinite Radon measure on $\R^d$, for example $d\sigma(x)=z\,dx$, where $z>0$.
A Poisson point process on $\R^d$ with intensity measure $\sigma$ is the probability measure $\pi$ on $\Gamma(\R^d)$ whose Fourier transform is given by the formula \eqref{vtdry7r} below.  The measure $\pi$ can also be uniquely characterized by the following property: For $\Lambda\in\mathscr B_0(\R^d)$,  the number of particles of a configuration $\gamma$ which belong to $\Lambda$  has Poisson distribution   with parameter $\sigma(\Lambda)$, and given that $\gamma\cap\Lambda=\{x_1,\dots,x_N\}$ the particles $x_1,\dots,x_N$ are independent and the distribution of each particle $x_i$ on  $\Lambda$ is $\frac1V\,\sigma$.
Here, $\mathscr B_0(\R^d)$ denotes the collection of all bounded Borel subsets of $\R^d$, and  $V=\sigma(\Lambda)$ is  the volume of $\Lambda$.

The Poisson point process $\pi$ can  be derived through the $N/V$-limit. More precisely, let us consider an isotone  sequence of sets $\Lambda^{(n)}\in\mathscr B_0(\R^d)$, i.e., $\Lambda^{(1)}\subset\Lambda^{(2)}\subset\Lambda^{(3)}\subset\dotsm$, and assume that $\bigcup_{n=1}^\infty\Lambda^{(n)}=\R^d$. Denote $V^{(n)}:=\sigma(\Lambda^{(n)})$ and let $N^{(n)}$ be natural numbers such that the condition \eqref{nig8rf} below is satisfied. Then the Poisson point process $\pi$ is equal to the limit (as $n\to\infty$)  of  the system of $N^{(n)}$ independent particles in $\Lambda^{(n)}$ such that the distribution of each particle on $\Lambda^{(n)}$ is $\frac1{V^{(n)}}\,\sigma$.

This result admits a  generalization to  measure-valued L\'evy processes on $\R^d$, which are  probability measures on $\mathbb K(\R^d)$, e.g.\ \cite{KingmanCRM,Kingman,KLV}. Here $\mathbb K(\R^d)$ denotes the space of all discrete signed Radon measures on $\R^d$, i.e., signed Radon measures of the form $\sum_i s_i\delta_{x_i}$, where $s_i\ne0$ and  $\delta_{x_i}$ denotes the Dirac measure with mass at $x_i$. The points $x_i$ may be interpreted as locations of  particles (or some organisms in biological interpretation). The weights $s_i$ are a certain attribute attached to these particles (organisms). Note that the set $\{x_i\}$ is not necessarily locally finite. For a large class of measure-valued L\'evy processes, $\{x_i\}$  is even dense in $\R^d$ a.s.\ \cite{KLV}.

In free probability, the notion of  independence of random variables is replaced by Voiculescu's definition of free independence of noncommutative prob\-ability spaces. This has led to a deep theory, in which many results are noncommutative analogs of fundamental results of classical probability theory, see e.g.\
\cite{NiSp,V,Voiculescu}. Let us  specifically mention the paper \cite{BP}, which studies stable laws and domains of attraction in free probability theory and establishes  the  Bercovici--Pata bijection between the  infinitely divisible distributions on $\R$ and the freely infinitely divisible distributions on $\R$. See also \cite{BNT,Biane,V} for  discussions of free L\'evy processes.

An analog of the classical Poisson distribution on $\R$ with parameter $\alpha$ is the free Poisson distribution with parameter
$\alpha$, also known as  the Marchenko--Pastur distribution. This distribution can be derived as the limit (as $n\to\infty$) of the $n$th  free convolution power of the distribution $(1-\frac{\alpha}n)\delta_0+\frac{\alpha}n\delta_1$, where $\alpha>0$ is a parameter of the distribution. It should be stressed that the free Poisson distribution is not discrete: it has density with respect to the Lebesgue measure and possibly one atom, see e.g.\ \cite{NiSp}.

As shown in \cite{BNT,V}, the Poisson point process with intensity $\sigma$ also has a counterpart in free probability.
According to \cite{BL}, the free Poisson process on $\R^d$ is a real algebra generated by a family of bounded selfadjoint operators $A(f)$ in the full Fock space over $L^2(\R^d,\sigma)$, and the vacuum state on this algebra plays the role of an expectation.  Here $f$ belongs to the class of bounded measurable functions on $\R^d$ with compact support. The operators $A(f)$ resemble the commuting operators in the symmetric Fock space over $L^2(\R^d,\sigma)$ which identify the Poisson point process, see e.g.\  \cite{L,Surgailis}.

In the case of the one-dimensional underlying space $\R$, Ben Arous and Kargin \cite{BAK} gave a rigorous meaning to the heuristic notion of a system of $N$ freely independent, identically distributed  particles.  Furthermore, it  follows from \cite[Theorem~2]{BAK}  that the $N/V$-limit holds for a free Poisson process on $\R$. More precisely, if the condition \eqref{nig8rf} below is satisfied, the free Poisson process with intensity measure $\sigma$  can be approximated by a system of $N^{(n)}$ freely independent particles in $\Lambda^{(n)}$ such that the distribution of each particle on $\Lambda^{(n)}$ is $\frac1{V^{(n)}}\,\sigma$.

The first result of the present paper states that the $N/V$-limit holds for a free Poisson process on any underlying space $\R^d$ (and even on any locally compact Polish space). We prove the convergence of the corresponding  moments, which is stronger than the weak convergence  shown in \cite{BAK} in the case of the underlying space $\R$. Furthermore, our proof of this result is significantly shorter and simpler than the proof in \cite{BAK}. We also extend the  $N/V$-limit to the case of a free L\'evy white noise as defined in \cite{BL}. In particular, such a result holds for a free L\'evy process without free Gaussian part \cite{BNT}.

The  main result of the paper concerns  a noncommutative probability space
$(\mathscr A(\Lambda),\tau_\Lambda)$ which describes a system of $N$ freely independent particles in a set $\Lambda\in\mathscr B_0(\R^d)$ which have distribution $\frac{\sigma}{V}$, where the number of particles, $N$, is random and has the (classical) Poisson distribution with parameter $\sigma(\Lambda)$. Here $\mathscr A(\Lambda)$ is an algebra of (unbounded) linear operators in a certain Hilbert space and $\tau_\Lambda$ is a trace on this algebra. As mentioned above, if the particles were independent rather than freely independent, then this particle system would be precisely the restriction to $\Lambda$ of the Poisson point process on $\R^d$ with intensity measure $\sigma$. However, in our noncommutative setting, $(\mathscr A(\Lambda),\tau_\Lambda)$ is not the restriction to  $\Lambda$ of the free Poisson process with intensity $\sigma$. Nevertheless, we prove that, if $(\Lambda^{(n)})_{n=1}^\infty$ is an isotone sequence of  sets from $\mathscr B_0(\R^d)$ whose union is $\mathbb R^d$, then the probability spaces $(\mathscr A(\Lambda^{(n)}),\tau_{\Lambda^{(n)}})$ converge, in the sense of moments, to the free Poisson process on $\R^d$ with intensity $\sigma$.

The latter result can be interpreted as follows. Let $\Theta,\Lambda\in\mathscr B_0(\R^d)$ be such that $\Theta\subset\Lambda$ and the set $\Lambda$ is much larger than $\Theta$. Then the restriction to $\Theta$ of the noncommutative probability space
$(\mathscr A(\Lambda),\tau_\Lambda)$ is very close to the restriction to $\Theta$ of the free Poisson process on $\R^d$ with intensity $\sigma$.
We also extend this result to the case of a free L\'evy white noise (in particular, a free L\'evy process).

We stress that, for the results of this paper, it is principal that both the underlying space and the intensity measure  of the free Poisson process are infinite.

The paper is organized as follows. In Section \ref{hklhyi}, we briefly recall some definitions and properties
related to the classical Poisson point process,  a measure-valued L\'evy process, and a L\'evy white noise. Although we do  not use these results directly in this paper,  they are needed to better understand analogy and differences between the classical and the free cases. In Section~\ref{niugt8i} we recall the definition of free independence, namely in terms of cumulants which is more convenient for applications. Moreover we define the free L\'evy white noise. In Section~\ref{jkgt8yt}, we construct noncommutative probability spaces which describe systems of freely independent, identically distributed particles, and free probability counterparts of discrete random measures $\sum_{i=1}^N s_i\delta_{x_i}$ such that the pairs $(s_i,x_i)$ are independent and identically distributed. In Section \ref{lngity8t}, we formulate the results of the paper. Finally, in Section \ref{yufu8} we prove the results.

\section{A measure-valued L\'evy process and a L\'evy white noise}\label{hklhyi}

Let $X$ be a locally compact Polish space.  Let $\mathscr B(X)$ denote the Borel $\sigma$-algebra on $X$, and  let $\mathscr B_0(X)$ denote the collection of all Borel sets in $X$ which have compact closure. A measure on $(X,\mathscr B(X))$ is called {\it Radon} if it takes finite values on each set from $\mathscr B_0(X)$. A {\it signed Radon measure on $X$} is a difference of two Radon measures on $X$. Let $\mathbb M(X)$ denote the collection of all signed Radon measures on $X$.
The space $\mathbb M(X)$ is equipped with the vague topology, and let $\mathscr B(\mathbb M(X))$ denote the corresponding Borel $\sigma$-algebra.

The {\it configuration space} $\Gamma(X)$ is defined as the collection  of all sets $\gamma\subset X$ that are locally finite. Usually, a configuration $\gamma\in\Gamma(X)$ is identified with the measure $\gamma=\sum_{x\in\gamma}\delta_x$, where $\delta_x$ denotes the Dirac measure with mass at $x$. Under this identification, the configuration $\gamma$ becomes a Radon measure on $X$, so we have the inclusion $\Gamma(X)\subset\mathbb M(X)$. Below it should always be clear from the context which of the two interpretations of a configuration we are currently using.

A measurable mapping $\eta$ from a probability space into $\mathbb M(X)$ is called a {\it (signed) random measure on $X$}, see e.g.\ \cite{Kallenberg, Kingman}. If $\eta$ takes a.s.\ values in $\Gamma(X)$, then $\eta$ is called a  {\it (simple) point process}.
(In particular, any probability measure on $\mathbb M(X)$ or $\Gamma(X)$ identifies a random measure or a point process, respectively.)

A random  measure $\eta$ is called {\it completely random} if, for any mutually disjoint sets $\Lambda_1,\dots,\Lambda_n\in\mathscr B_0(X)$,
the random variables $\eta(\Lambda_1),\dots,\eta(\Lambda_n)$  are independent \cite{KingmanCRM,Kingman}.

Let us fix a reference measure $\sigma$ on $X$. We assume that $\sigma$ is a non-atomic Radon measure and  $\sigma(X)=\infty$. 

Assume that a completely random measure $\eta$ has in addition  the property that, for any two sets $\Lambda_1,\Lambda_2\in\mathscr B_0(X)$ such that
$\sigma(\Lambda_1)=\sigma(\Lambda_2)$, the random variables $\eta(\Lambda_1)$ and $\eta(\Lambda_2)$ have the same distribution. Then, in e.g.\ \cite{KLV,TsVY}, such a random measure was called a {\it  measure-valued L\'evy process on $X$}. If $X=\R$, $\sigma$ is the Lebesgue measure,  and $L_t:=\eta([0,t])$, then $(L_t)_{t\ge0}$ is just a classical L\'evy process of bounded variation. The following proposition describes the measure-valued L\'evy process on $X$ in terms of their Fourier transform.

\begin{proposition}[\cite{KingmanCRM,Kingman}]\label{vuytd7i}
(i) Let $\eta$ be a random measure on $X$, and let  $\mu$ denote the distribution of $\eta$. Then $\eta$ is a measure-valued L\'evy process on $X$ if and only if there exists a measure $\nu$ on $\R^*:=\R\setminus\{0\}$ which satisfies 
\begin{equation}\label{utf7r}
\int_{\R^*}\min\{1,|s|\}\,d\nu(s)<\infty,\end{equation} 
and the Fourier transform of $\mu$ is given by 
\begin{equation}\label{tyr7i54}
\int_{\mathbb M(X)}e^{i\langle f,\eta\rangle}\,d\mu(\eta)=
\exp\left[\int_{X\times \R^*}(e^{isf(x)}-1)\,d\sigma(x)\,d\nu(s)\right],\quad f\in B_0(X).
\end{equation}
Here $B_0(X)$ is the set of all bounded measurable functions on $X$ with compact support, and $\langle f,\eta\rangle:=\int_Xf\,d\eta$.

(ii) Let $\mu$ denote the distribution of a measure-valued L\'evy process on $X$. 
Let $\mathbb K(X)$ denote  the set of all signed discrete Radon measures on $X$, i.e.,
$$\mathbb K(X):=\left\{
\eta=\sum_i s_i\delta_{x_i}\in \mathbb M(X) \,\Big|\, s_i\ne0,\, x_i\in X
\right\}.$$
  Then $\mu(\mathbb K(X))=1$.

\end{proposition}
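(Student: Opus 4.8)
\medskip
\noindent The plan is to characterise the Fourier transform by reducing to the one-dimensional marginals $\eta(\Lambda)$, $\Lambda\in\mathscr B_0(X)$, and then reassembling the multivariate picture through complete randomness. The \emph{if} direction is the routine one: assuming the Fourier transform has the stated form, take $f=\sum_{j=1}^n t_j\mathbf 1_{\Lambda_j}$ with the $\Lambda_j$ pairwise disjoint and note that the exponent splits as $\sum_{j=1}^n\sigma(\Lambda_j)\int_{\R^*}(e^{ist_j}-1)\,d\nu(s)$, so the joint characteristic function of $(\eta(\Lambda_1),\dots,\eta(\Lambda_n))$ factorises into its marginals. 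This yields at once both complete randomness and the dependence of the law of $\eta(\Lambda)$ on $\Lambda$ only through $\sigma(\Lambda)$. The assumption $\int_{\R^*}\min\{1,|s|\}\,d\nu(s)<\infty$ is exactly what makes each $\int_{\R^*}(e^{isu}-1)\,d\nu(s)$ finite, since $|e^{isu}-1|\le\min\{2,|su|\}$.

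For the \emph{only if} direction, fix $\Lambda\in\mathscr B_0(X)$. Since $\sigma$ is non-atomic, $\Lambda$ can be partitioned into $n$ disjoint pieces of equal $\sigma$-measure; by complete randomness the corresponding values of $\eta$ are independent, and by $\sigma$-stationarity they are identically distributed, so $\eta(\Lambda)$ is infinitely divisible for every $n$. Hence $\eta(\Lambda)$ admits a L\'evy--Khintchine representation with some triple $(a_\Lambda,b_\Lambda,\rho_\Lambda)$. Finite additivity of $\eta$ over disjoint sets together with the uniqueness of the L\'evy--Khintchine triple shows that $\Lambda\mapsto(a_\Lambda,b_\Lambda,\rho_\Lambda)$ is additive; combined with its dependence on $\Lambda$ only through $\sigma(\Lambda)$, a Cauchy-type functional-equation argument (using measurability in $\sigma(\Lambda)$) forces $a_\Lambda=a\,\sigma(\Lambda)$, $b_\Lambda=b\,\sigma(\Lambda)$ and $\rho_\Lambda=\sigma(\Lambda)\,\nu$ for a fixed $a\in\R$, $b\ge0$ and a fixed L\'evy measure $\nu$.

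The main obstacle is to upgrade this generic L\'evy--Khintchine form to the stated exponent, i.e.\ to show $b=0$, to drop the small-jump compensator in favour of the uncompensated integrand $e^{isu}-1$, and to rule out a residual bounded-variation drift. Here I would exploit that $\eta$ is a genuine, countably additive signed Radon measure rather than a mere generalised process: it has a.s.\ finite total variation on every bounded set, which is the measure-theoretic counterpart of the associated additive process having sample paths of bounded variation. By the classical bounded-variation criterion for infinitely divisible laws this forces $b=0$ and $\int_{|s|\le1}|s|\,d\nu(s)<\infty$, which both yields the integrability condition and allows the compensator to be absorbed; any remaining deterministic drift would be a multiple of the non-atomic measure $\sigma$, which the Poisson representation of part~(ii) shows cannot occur, so it vanishes. (Heuristically, a Gaussian share of variance $b\,\sigma(\Lambda)/n$ spread over $n$ equal pieces makes $\sum_{i=1}^n\mathbb E|\eta(\Lambda_i)|$ grow like $\sqrt n$, blowing up the total variation.) Finally I pass from simple functions to arbitrary $f\in B_0(X)$ by approximation and dominated convergence, obtaining \eqref{tyr7i54}.

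For part~(ii) I would realise $\eta$ through a Poisson random measure. The exponent just computed is precisely the Fourier transform of $\eta(\Lambda)=\int_{\Lambda\times\R^*}s\,N(dx,ds)$, where $N$ is a Poisson random measure on $X\times\R^*$ with intensity $\sigma\otimes\nu$; the integrability $\int_{\R^*}\min\{1,|s|\}\,d\nu(s)<\infty$ guarantees that this integral converges absolutely on each bounded set. Listing the atoms of $N$ as $(x_i,s_i)$ then gives $\eta=\sum_i s_i\delta_{x_i}$ with $\sum_{x_i\in\Lambda}|s_i|<\infty$ almost surely for every $\Lambda\in\mathscr B_0(X)$, so $\eta\in\mathbb K(X)$ a.s., i.e.\ $\mu(\mathbb K(X))=1$.
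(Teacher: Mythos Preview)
The paper does not give a proof of Proposition~\ref{vuytd7i}; it is simply quoted from Kingman \cite{KingmanCRM,Kingman} and used as background, so there is no argument in the paper to compare your sketch against.

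Your outline is the standard route to Kingman's structure theorem: infinite divisibility of each $\eta(\Lambda)$ via non-atomic splitting of $\Lambda$, the L\'evy--Khintchine representation, identification of the characteristics through additivity and the Cauchy equation in $\sigma(\Lambda)$, and then the bounded-variation criterion (using that a signed Radon measure has finite total variation on bounded sets) to kill the Gaussian part and upgrade $\int_{|s|\le 1}s^2\,d\nu<\infty$ to $\int_{|s|\le 1}|s|\,d\nu<\infty$. That part is sound.

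One genuine gap: your dismissal of a residual deterministic drift $c\,\sigma$ by invoking ``the Poisson representation of part~(ii)'' is circular, since (ii) is a conclusion of the proposition, not a hypothesis available in the proof of (i). A term $c\,\sigma$ is fully compatible with complete randomness and with the law of $\eta(\Lambda)$ depending only on $\sigma(\Lambda)$, so nothing in the hypotheses of (i) alone forces $c=0$. Either the proposition should be read as a characterisation modulo such a drift (as in Kingman's original structure theorem, which explicitly allows a deterministic component), or one must locate an additional normalisation in the cited references; you cannot close this by appealing to (ii). This is really a wrinkle in how the statement is transcribed here rather than a flaw in your strategy, but your write-up should not paper over it.
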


If the measure $\mu$ has Fourier transform \eqref{tyr7i54}, then $\nu$ is called the {\it L\'evy measure of $\mu$}.

\begin{remark}\label{utr7or} Assume in addition that $X$ is a smooth Riemannian manifold. Let $\nu$ be a measure on $\R^*$  which does not satisfy \eqref{utf7r} but satisfies the following weaker assumption: 
\begin{equation}\label{gf7}
\int_{\R^*}\min\{1,s^2\}\,d\nu(s)<\infty.\end{equation}
Then, instead of a measure-valued L\'evy process $\eta$, one may consider the associated 
{\it centered generalized stochastic process with independent values}, $\tilde \mu$, see \cite{GV}. In e.g.\ \cite{DOP}, $\tilde \mu$ is also called the  {\it centered L\'evy white noise measure}. 
Let us explain this in more detail.  
Let
$$\mathscr D(X)\subset L^2(X,\sigma)\subset\mathscr D'(X)$$ be the standard triple of spaces in which $\mathscr D(X)$ is the nuclear space of smooth, compactly supported functions on $X$ and
$\mathscr D'(X)$  is the dual space of $\mathscr D(X)$ with respect to zero space $L^2(X,\sigma)$, see e.g.\ \cite[Chap.~11, Sect.~1]{BUS}. Evidently, $\mathbb M(X)\subset \mathscr D'(X).$ For simplicity of notations, let us also assume that the following stronger assumption holds:
\begin{equation}\label{gf7tr7}
\int_{\R^*}s^2\,d\nu(s)<\infty,\end{equation}
 Then there exists a (centered) probability measure $\tilde\mu$ on $\mathscr D'(X)$ which has Fourier transform
$$\int_{\mathscr D'(X)}e^{i\langle f,\omega\rangle}\,d\tilde\mu(\omega)=
\exp\left[\int_{X\times \R^*}\big(e^{isf(x)}-1-isf(x)\big)\,d\sigma(x)\,d\nu(s)\right],\quad f\in \mathscr D(X).
$$
Here for  $\omega\in\mathscr D'(X)$ and $f\in\mathscr D(X)$, we denote by $\langle f,\omega\rangle$ the dual pairing between   $\omega$ and $f$. 
\end{remark}

By choosing in Proposition~\ref{vuytd7i} the measure $\nu$ to be $\delta_1$, we get the  {\it Poisson point process on $X$ with intensity measure $\sigma$}, whose distribution we denote by $\pi$. We have $\pi(\Gamma(X))$=1 and $\pi$ has Fourier transform
\begin{equation}\label{vtdry7r}
\int_{\Gamma(X)}e^{i\langle f,\gamma\rangle}\,d\pi(\gamma)=
\exp\left[\int_{X}(e^{if(x)}-1)\,d\sigma(x)\right],\quad f\in B_0(X).\end{equation}

 \begin{remark}\label{ur7o}
Let us equip $\R^*=\R\setminus\{0\}$ with a metric such that a set $\Lambda\subset\R^*$ is compact in $\R^*$ if and only if $\Lambda$ is compact in $\R$. In particular, for any compact set $\Lambda$ in $\R^*$ the distance (in $\R$) from $\Lambda$ to zero is strictly positive. We construct the following measurable injective (but not surjective) mapping
$$\mathbb K(X)\ni\eta=\sum_i s_i\delta_{x_i}\mapsto\mathscr R(\eta):=\{(x_i,s_i)\}\in\Gamma(X\times\R^*),$$
see \cite[Theorem~6.2]{HKPR}. As easily seen, the pushforward of the measure $\mu$ from Proposition~\ref{vuytd7i} under $\mathscr R$ is the Poisson point process on $X\times\R^*$ with intensity measure $\sigma\otimes\nu$.
\end{remark}

Now we will formulate two results of the classical probability theory whose analogs we want to derive in the free setting.

\begin{proposition}\label{fyer6} Let $\Lambda\in\mathscr B_0(X)$ with $\sigma(\Lambda)>0$. Let $(x_i)_{i=1}^\infty$ be a sequence of independent random points in $\Lambda$ with distribution $\sigma_\Lambda:=\frac1{\sigma(\Lambda)}\,\sigma$. Let $N$ be a random variable which has Poisson distribution with parameter $\sigma(\Lambda)$, and let $N$ be independent of $(x_i)_{i=1}^\infty$.
We define  $\pi_\Lambda$  as the distribution of the point process
$\sum_{i=1}^{N}\delta_{x_i}$.
Then $\pi_\Lambda$ is the restriction  to the set $\Lambda$ of the Poisson point process $\pi$, i.e., the pushforward of $\pi$ under the measurable mapping
$$\Gamma(X)\ni\gamma\mapsto\gamma\cap\Lambda\in\Gamma(\Lambda), $$
which coincides with the Poisson point process on $\Lambda$ with intensity $\sigma$.
\end{proposition}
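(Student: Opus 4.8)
The plan is to prove the equality of the three point processes in question — the measure $\pi_\Lambda$, the Poisson point process on $\Lambda$ with intensity $\sigma$, and the pushforward of $\pi$ under $\gamma\mapsto\gamma\cap\Lambda$ — by showing that they share the same Fourier transform. Since a probability measure on $\Gamma(\Lambda)$ is uniquely determined by its Fourier transform evaluated on those $f\in B_0(X)$ with support contained in $\Lambda$, it suffices to compute $\int e^{i\langle f,\eta\rangle}$ for each of the three processes and to verify that the results coincide for every such $f$. Before doing so I would record that, because $\sigma$ (hence $\sigma_\Lambda$) is non-atomic, the points $x_1,\dots,x_N$ are almost surely pairwise distinct, so $\sum_{i=1}^N\delta_{x_i}$ is almost surely a genuine (simple, locally finite) configuration in $\Lambda$ and $\pi_\Lambda$ is well defined as a point process.

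For the main computation, fix such an $f$, write $V:=\sigma(\Lambda)$, and note $\langle f,\eta\rangle=\sum_{i=1}^N f(x_i)$. Conditioning on $N=n$ and using that the $x_i$ are independent with common law $\sigma_\Lambda$ and independent of $N$, I obtain
\begin{equation*}
\int_{\Gamma(\Lambda)}e^{i\langle f,\eta\rangle}\,d\pi_\Lambda(\eta)=\sum_{n=0}^\infty e^{-V}\frac{V^n}{n!}\left(\int_\Lambda e^{if(x)}\,d\sigma_\Lambda(x)\right)^n.
\end{equation*}
Setting $c:=\int_\Lambda e^{if(x)}\,d\sigma_\Lambda(x)=\frac1V\int_\Lambda e^{if(x)}\,d\sigma(x)$ and summing the Poisson series gives $e^{-V}e^{Vc}=e^{V(c-1)}$. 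Since $V(c-1)=\int_\Lambda\big(e^{if(x)}-1\big)\,d\sigma(x)$, this yields
\begin{equation*}
\int_{\Gamma(\Lambda)}e^{i\langle f,\eta\rangle}\,d\pi_\Lambda(\eta)=\exp\left[\int_\Lambda\big(e^{if(x)}-1\big)\,d\sigma(x)\right],
\end{equation*}
which is precisely the Fourier transform \eqref{vtdry7r} of the Poisson point process on $\Lambda$ with intensity $\sigma$. Hence $\pi_\Lambda$ coincides with that process.

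It then remains to identify this with the pushforward of $\pi$ under $\gamma\mapsto\gamma\cap\Lambda$. For $f$ supported in $\Lambda$ one has $\langle f,\gamma\cap\Lambda\rangle=\langle f,\gamma\rangle$, so the Fourier transform of the pushforward equals $\int_{\Gamma(X)}e^{i\langle f,\gamma\rangle}\,d\pi(\gamma)=\exp\big[\int_\Lambda(e^{if(x)}-1)\,d\sigma(x)\big]$ by \eqref{vtdry7r}, in agreement with the display above. Uniqueness of Fourier transforms then forces all three processes to coincide. The only points requiring care, as opposed to the elementary series manipulation, are the assertion that the Fourier transform on functions supported in $\Lambda$ separates point processes on $\Lambda$ and the measurability and well-definedness of $\gamma\mapsto\gamma\cap\Lambda$; both are standard for Radon point processes (see e.g.\ \cite{Kallenberg}), so I expect no substantial obstacle — the statement is in essence a bookkeeping exercise with the mixed-Poisson Fourier transform.
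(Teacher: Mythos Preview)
Your argument is correct and is the standard one: compute the Laplace/Fourier functional of $\pi_\Lambda$ by conditioning on $N$, sum the Poisson series, and match it against \eqref{vtdry7r}; then observe that for $f$ supported in $\Lambda$ the functional of the pushforward of $\pi$ agrees, and invoke the fact that the Laplace/Fourier functional determines the law of a point process. The side remarks you make (a.s.\ simplicity from non-atomicity of $\sigma$, measurability of $\gamma\mapsto\gamma\cap\Lambda$) are the right technical checks, and they are indeed routine.

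As for comparison with the paper: there is nothing to compare. The paper does not prove Proposition~\ref{fyer6}; it is presented in Section~\ref{hklhyi} as a classical, well-known characterization of the Poisson point process (and the remark immediately following it even notes that this description is often taken as the \emph{definition} of $\pi$). The section is explicitly described in the introduction as a recollection of known facts included only for context. So your proof is not an alternative to the paper's proof --- it simply supplies the standard argument that the paper takes for granted.
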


\begin{remark}Quite often, Proposition~\ref{fyer6} is used as the definition of the Poisson point process $\pi$, i.e., one defines $\pi$ as the unique probability measure on $\Gamma(X)$ such that, for each $\Lambda\in\mathscr B_0(X)$, the restriction of $\pi$ to $\Lambda$ is equal to the measure $\pi_\Lambda$ from Proposition~\ref{fyer6}.
\end{remark}

\begin{proposition}[$N/V$-limit]\label{ye6i} Let $\Lambda^{(n)}\in\mathscr B_0(X)$, $n\in\mathbb N$, be such that $\Lambda^{(1)}\subset\Lambda^{(2)}\subset\Lambda^{(3)}\subset\cdots$ and $\bigcup_{n=1}^\infty\Lambda^{(n)}=X$, i.e., $\Lambda^{(n)}\nearrow  X$. Denote $V^{(n)}:=\sigma(\Lambda^{(n)})$, $V^{(n)}\to\infty$ as $n\to\infty$. Let $N^{(n)}\in\mathbb N$, $n\in\mathbb N$, be such that
\begin{equation}\label{nig8rf}
\lim_{n\to\infty}\frac{N^{(n)}}{V^{(n)}}=1.\end{equation}
(Note that the numbers $N^{(n)}$ are not random.)
For each $n\in\mathbb N$, we define $\rho^{(n)}$ as the distribution of the point process
\begin{equation}\label{gyit8p}
\sum_{i=1}^{N^{(n)}}\delta_{x_i^{(n)}},\end{equation} where $x_i^{(n)}$ are independent random points in $\Lambda^{(n)}$ with distribution $\sigma^{(n)}:=\frac{1}{V^{(n)}}\,\sigma$.
Then
$$\rho^{(n)}\to\pi\quad\text{weakly as }n\to\infty.$$

\end{proposition}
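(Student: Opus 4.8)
The plan is to prove weak convergence through pointwise convergence of the Fourier transforms (characteristic functionals). First note that $\pi$ and each $\rho^{(n)}$ are probability measures concentrated on $\Gamma(X)$: since $\sigma$ is non-atomic, the points $x_i^{(n)}$ are a.s.\ pairwise distinct, so $\sum_{i=1}^{N^{(n)}}\delta_{x_i^{(n)}}$ is a.s.\ a genuine configuration. By the standard criterion for weak convergence of point processes with respect to the vague topology (see e.g.\ \cite{Kallenberg}), it then suffices to show that for every continuous $f$ with compact support,
$$\int_{\Gamma(X)}e^{i\langle f,\gamma\rangle}\,d\rho^{(n)}(\gamma)\longrightarrow \exp\left[\int_X(e^{if(x)}-1)\,d\sigma(x)\right]\quad(n\to\infty),$$
the right-hand side being exactly the Fourier transform \eqref{vtdry7r} of $\pi$. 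Crucially, the limit is already identified as the functional of the \emph{known} process $\pi$, so no separate tightness argument is required.

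Second, I would compute the left-hand side explicitly. Because the points $x_i^{(n)}$ are independent and identically distributed and the number $N^{(n)}$ is deterministic, the characteristic functional factorizes as a pure power:
$$\int_{\Gamma(X)}e^{i\langle f,\gamma\rangle}\,d\rho^{(n)}(\gamma)=\prod_{i=1}^{N^{(n)}}\mathbb E\big[e^{if(x_i^{(n)})}\big]=\left(\frac1{V^{(n)}}\int_{\Lambda^{(n)}}e^{if(x)}\,d\sigma(x)\right)^{N^{(n)}}.$$
Since $f$ has compact support and $\Lambda^{(n)}\nearrow X$, for all $n$ large enough $\operatorname{supp}f\subset\Lambda^{(n)}$, so $e^{if}-1$ vanishes outside $\Lambda^{(n)}$ and
$$\frac1{V^{(n)}}\int_{\Lambda^{(n)}}e^{if(x)}\,d\sigma(x)=1+\frac{c}{V^{(n)}},\qquad c:=\int_X\big(e^{if(x)}-1\big)\,d\sigma(x),$$
where $c$ is a fixed finite complex number, as $e^{if}-1$ is bounded and supported on a set of finite $\sigma$-measure.

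Third, I would pass to the limit. Writing $(1+c/V^{(n)})^{N^{(n)}}=\exp\big[N^{(n)}\log(1+c/V^{(n)})\big]$ and using $V^{(n)}\to\infty$, the expansion $\log(1+w)=w+O(w^2)$ gives
$$N^{(n)}\log\!\Big(1+\frac{c}{V^{(n)}}\Big)=\frac{N^{(n)}}{V^{(n)}}\,c+N^{(n)}\,O\!\Big(\frac1{(V^{(n)})^2}\Big).$$
By \eqref{nig8rf} the first term tends to $c$, while the second is $O\big((N^{(n)}/V^{(n)})\cdot(1/V^{(n)})\big)\to0$. Hence the characteristic functional converges to $e^{c}$, which is precisely the right-hand side displayed in the first step, and the proof is complete.

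The analytic content here is entirely routine; the only points needing care are (a) invoking the correct criterion that vague-weak convergence of point processes is equivalent to convergence of the characteristic functionals on a determining class of continuous compactly supported test functions, and (b) the exhaustion argument $\operatorname{supp}f\subset\Lambda^{(n)}$, which removes any truncation and turns the normalized integral into $1+c/V^{(n)}$ \emph{exactly} rather than merely asymptotically. The deterministic nature of $N^{(n)}$ is what makes the functional factorize as a pure power, reducing everything to the elementary limit $(1+c/V^{(n)})^{N^{(n)}}\to e^{c}$, and this is the sole place where condition \eqref{nig8rf} enters. I do not anticipate any genuine obstacle beyond this bookkeeping.
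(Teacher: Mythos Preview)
The paper does not actually prove this proposition; it simply writes ``For a proof of Proposition~\ref{ye6i}, see e.g.\ \cite{NZ}.'' Your argument is a correct, self-contained proof by the standard route of characteristic (equivalently, Laplace) functionals, and nothing in it is problematic: the factorization from independence, the exhaustion step yielding $1+c/V^{(n)}$ exactly, and the elementary limit $(1+c/V^{(n)})^{N^{(n)}}\to e^{c}$ under \eqref{nig8rf} are all sound. The only place where a referee might ask for a word more is point~(a): in the random-measures literature the criterion for vague-weak convergence is most often stated via Laplace functionals $\mathbb E\,e^{-\langle f,\gamma\rangle}$ for nonnegative $f\in C_c(X)$ rather than Fourier functionals (see Kallenberg), but your computation goes through verbatim with $e^{-f}$ in place of $e^{if}$, so this is cosmetic. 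In short, you have supplied what the paper omits, by the expected elementary method.
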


For a proof of Proposition \ref{ye6i}, see e.g.\ \cite{NZ}.

\begin{remark}\label{ftuyr6y7}
In view of Remark~\ref{ur7o}, the result of Proposition~\ref{ye6i} can be extended to a measure $\mu$ from Proposition~\ref{vuytd7i}.
More precisely, let $\Lambda^{(n)}\in\mathscr B_0(X)$, $\Lambda^{(n)}\nearrow X$, let $\Delta^{(n)}\in\mathscr B_0(\R^*)$,  $\Delta^{(n)}\nearrow \R^*$, let $V^{(n)}:=\sigma(\Lambda^{(n)})\nu(\Delta^{(n)})$, and let $N^{(n)}\in\mathbb N$ satisfy \eqref{nig8rf}. For each $n\in\mathbb N$, we define  $\rho^{(n)}$ as the distribution of the random measure
\begin{equation}\label{yr57i}
\sum_{i=1}^{N^{(n)}}s_i^{(n)}\delta_{x_i^{(n)}},\end{equation}
where $(x_i^{(n)},s_i^{(n)})$  
are independent random points in $\Lambda^{(n)}\times\Delta^{(n)}$   with distribution $\frac1{V^{(n)}}\sigma\otimes\nu$. Then $\rho^{(n)}\to\mu$ weakly as $n\to\infty$.

Furthermore, it can be shown that, if $X$ is a smooth Riemannian manifold and condition \eqref{gf7} is satisfied, then $\tilde\rho^{(n)}\to\tilde\mu$ weakly as $n\to\infty$. Here $\tilde\rho^{(n)}$ is the centered random measure $\rho^{(n)}$, i.e., the distribution of the random measure
$$\sum_{i=1}^{N^{(n)}}s_i^{(n)}\delta_{x_i^{(n)}}-\left(N^{(n)}\int_{\Delta^{(n)}} s\,d\nu^{(n)}(s)\right)\sigma^{(n)}.$$
\end{remark}

Let us also mention a characterization of a measure-valued L\'evy process through cumulants. Assume that the measure $\nu$ on $\R^*$ satisfies
\begin{equation}\label{yt5re7i}
\int_{\R^*} |s|^n\,d\nu(s)<\infty\quad\text{for all }n\in\mathbb N.\end{equation}
Then the measure $\mu$ from Proposition~\ref{vuytd7i}  has all moments finite, i.e., for any $f\in B_0(X)$ and $n\in\mathbb N$,
$$\int_{\mathbb K(X)}|\langle f,\eta\rangle|^n\,d\mu(\eta)<\infty.$$
According to the classical definition of cumulants,
for any $f_1,\dots,f_n\in B_0(X)$, the cumulant $C^{(n)}(\langle f_1,\cdot\rangle,\dots,\langle f_n,\cdot\rangle)$ is defined recurrently by the following formula, which connects the cumulants with the moments:
$$\int_{\mathbb K(X)}\langle f_1,\eta\rangle \langle f_2,\eta\rangle\dotsm\langle f_n,\eta\rangle\,d\mu(\eta)=\sum_{\theta\in\mathscr P(n)}\prod_{B\in\theta}C(B;\langle f_1,\cdot\rangle, \dots,\langle f_n,\cdot\rangle ).$$
Here $\mathscr P(n)$ denotes the collection of all set partitions of $\{1,2,\dots,n\}$ and for each $B=\{i_1,\dots,i_k\}\subset\{1,2,\dots,n\}$,
$$C\big(B;\langle f_1,\cdot\rangle, \dots,\langle f_n,\cdot\rangle\big ):=C^{(k)}\big(\langle f_{i_1},\cdot\rangle, \dots,\langle f_{i_k},\cdot\rangle\big).$$
A direct calculation shows that the cumulants of $\mu$ are given by
\begin{equation}\label{uytr75r}
C^{(n)}\big(\langle f_1,\cdot\rangle,\dots,\langle f_n,\cdot\rangle\big)=\int_{\R^*}s^n\,d\nu(s)\int_X f_1(x)f_2(x)\dotsm f_n(x)\,d\sigma(x),\quad n\in\mathbb N.\end{equation}
In particular, the cumulants of the Poisson point process $\pi$ are given by
\begin{equation}\label{uu7t}
C^{(n)}\big(\langle f_1,\cdot\rangle,\dots,\langle f_n,\cdot\rangle\big)=\int_X f_1(x)f_2(x)\dotsm f_n(x)\,d\sigma(x),\quad n\in\mathbb N.\end{equation}

\begin{remark}\label{uf7r7}
If the measure $\nu$ satisfies \eqref{yt5re7i} for $n\ge2$, then the first cumulant of the measure $\tilde\mu$ given in  Remark~\ref{utr7or} is equal
 to zero, while the cumulants $C^{(n)}$ for $n\ge2$ are still given by formula \eqref{uytr75r}.
\end{remark}

\section{Free independence and free L\'evy white noise}\label{niugt8i}

Let us recall the definition of free independence and free L\'evy white noise.

Let $\mathscr A$ be a von Neumann algebra of bounded linear operators acting in a separable Hilbert space, and let $\tau$ be a faithful normal trace which satisfies the condition $\tau(\mathbf 1)=1$.
The pair $(\mathscr A,\tau)$ is called a {\it $W^*$-probability space}.
An element $a\in\mathscr A$ is called a {\it noncommutative random variable}.

Let $\mathscr A_1,\dots,\mathscr A_n$ be subalgebras of $\mathscr A$. The algebras $\mathscr A_1,\dots,\mathscr A_n$ are called {\it freely independent\/} if, for any numbers $i_1,i_2,\dots,i_m\in\{1,\dots,n\}$ ($m\ge2$) such that $i_l\ne i_{l+1}$ for $l=1,\dots,m-1$, and for all
$a_l\in\mathscr A_{i_l}$ with $\tau(a_l)=0$, $l=1,\dots,m$, we have
$$\tau(a_1 a_2\dotsm a_m)=0.$$

Selfadjoint elements $a_1,\dots,a_n\in\mathscr A$ are called {\it freely independent} if the  algebras
$\mathscr A_1,\dots,\mathscr A_n$  that they respectively generate are freely independent.

The above definition of free independence is not very convenient for applications. Speicher \cite{Speicher94} (see also \cite{Speicher}) gave an equivalent definition based on the idea of free cumulants.

A set partition $\theta=\{B_1,\dots,B_k\}$ of $\{1,\dots,n\}$ is called {\it non-crossing\/} if there do not exist $B_i,B_j\in\theta$, $i\ne j$, for which the following inequalities hold: $x_1<y_1<x_2<y_2$ for some $x_1,x_2\in B_i$ and $y_1,y_2\in B_j$.   We denote by $\mathscr {NP}(n)$ the collection of all non-crossing set partitions of  $\{1,\dots,n\}$.

For $a_1,\dots,a_n\in\mathscr A$, the free cumulant $R^{(n)}(a_1,\dots,a_n)$ is defined recurrently by the following formula:
\begin{equation}\label{vgjuftuf}
\tau(a_1 a_2\dotsm a_n)=\sum_{\theta\in \mathscr {NP}(n)}\prod_{B\in\theta}R(B;a_1,\dots,a_n),\end{equation}
where for each $B=\{i_1,\dots,i_k\}\subset\{1,2,\dots,n\}$ with $i_1<i_2<\dots<i_k$,
\begin{equation}\label{gydfy6er}
 R(B;a_1,\dots,a_n):=R^{(k)}(a_{i_1},\dots,a_{i_k}).\end{equation}

\begin{theorem}[\cite{Speicher94}]\label{tyde6e6}
The selfadjoint elements $a_1,\dots,a_n\in\mathscr A$ are  freely independent if and only if, for each $k\ge2$ and any indices
$i_1,\dots,i_k\in\{1,\dots,n\}$ such that there exist $i_l\ne i_m$,  we have
$$R^{(k)}(a_{i_1},\dots,a_{i_k})=0.$$
\end{theorem}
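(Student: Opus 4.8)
The plan is to derive both implications from the moment--cumulant relation \eqref{vgjuftuf}, which by Möbius inversion over the lattice $\mathscr{NP}(n)$ of non-crossing partitions determines the cumulants $R^{(k)}$ from the moments and conversely; in particular the $R^{(k)}$ are multilinear in their arguments, and a family of multilinear functionals prescribed on $\mathscr{NP}(n)$ corresponds to a \emph{unique} moment functional. The one technical device I would set up first is the \emph{formula for free cumulants with products as entries}: grouping $b_1,\dots,b_k$ according to an interval partition $\hat\sigma=\{V_1,\dots,V_m\}$ of $\{1,\dots,k\}$ into blocks of consecutive indices and writing $B_r:=\prod_{j\in V_r}b_j$, one has
\[
R^{(m)}(B_1,\dots,B_m)=\sum_{\substack{\pi\in\mathscr{NP}(k)\\ \pi\vee\hat\sigma=1_k}}\prod_{W\in\pi}R(W;b_1,\dots,b_k),
\]
where $\vee$ is the join in the partition lattice and $1_k$ its maximal element. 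I expect establishing this identity to be the main obstacle: I would prove it by induction on $k$ using the recursive ``innermost interval block'' structure of $\mathscr{NP}(k)$ (equivalently, via the Kreweras complementation map), comparing both sides against \eqref{vgjuftuf}.

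Granting it, the key consequence is that \emph{cumulants with arguments of mixed colour vanish}, where I call the colour of an element of $\mathscr A_j$ the index $j$. Indeed, suppose $b_1,\dots,b_m$ lie in $\mathscr A_{c_1},\dots,\mathscr A_{c_m}$ with $c_1,\dots,c_m$ not all equal; each $b_r$ is a polynomial in $a_{c_r}$, and expanding and applying the products formula expresses $R^{(m)}(b_1,\dots,b_m)$ as a sum over $\pi\in\mathscr{NP}(k)$ with $\pi\vee\hat\sigma=1_k$. If $R^{(k)}$ vanishes on all mixed-colour generator arguments, every $\pi$ possessing a block of more than one colour contributes $0$, so only monochromatic $\pi$ remain. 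But a monochromatic $\pi$ refines the partition $\hat c$ of $\{1,\dots,k\}$ into colour classes, as does the interval partition $\hat\sigma$, whence $\pi\vee\hat\sigma\le\hat c<1_k$ (the colours being not all equal), contradicting $\pi\vee\hat\sigma=1_k$. Hence the sum is empty and $R^{(m)}(b_1,\dots,b_m)=0$.

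For the implication ``vanishing mixed cumulants $\Rightarrow$ free independence'' I would take $b_1,\dots,b_m$ with $b_l\in\mathscr A_{j_l}$, $j_l\ne j_{l+1}$ and $\tau(b_l)=0$, and evaluate $\tau(b_1\cdots b_m)$ by \eqref{vgjuftuf}. Here I would use the elementary fact (proved by induction on $m$) that every $\theta\in\mathscr{NP}(m)$ contains at least one block that is an interval $\{l,l+1,\dots,l+r\}$ of consecutive indices. For such a block: if $r\ge1$ it contains the adjacent indices $l,l+1$, whose colours differ by the alternation hypothesis, so $R(B;b_1,\dots,b_m)=0$ by the previous paragraph; if $r=0$ it is a singleton and $R^{(1)}(b_l)=\tau(b_l)=0$ by centering. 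Thus every product $\prod_{B\in\theta}R(B;b_1,\dots,b_m)$ carries a zero factor, giving $\tau(b_1\cdots b_m)=0$, which is precisely free independence.

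Finally, for ``free independence $\Rightarrow$ vanishing mixed cumulants'' I would argue by uniqueness rather than by a direct induction. Free independence forces every joint moment $\tau(a_{i_1}\cdots a_{i_k})$ to be a universal polynomial in the marginal moments $\tau(a_j^p)$, since the centering recursion reduces any word to strictly lower order; thus the moment functional is pinned down by free independence together with the marginals. Now define candidate cumulants $\tilde R$ to be $0$ on mixed-colour arguments and equal to the marginal cumulants of $a_j$ on pure arguments. By the previous two paragraphs the moment functional $\tilde m$ produced from $\tilde R$ via \eqref{vgjuftuf} is itself freely independent and has exactly these marginals, so by the uniqueness just noted $\tilde m$ coincides with the true moment functional; Möbius inversion then forces the true cumulants to equal $\tilde R$, and in particular all mixed cumulants vanish. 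I would emphasise that every identity invoked here is purely algebraic, so no positivity of $\tau$ is needed beyond what is already assumed.
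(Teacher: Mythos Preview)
The paper does not prove this theorem; it is quoted as a known result with a citation to Speicher's original article \cite{Speicher94} (see also \cite{NiSp}), so there is no in-paper proof to compare against.

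Your proposal is correct and is essentially the standard argument one finds in \cite{NiSp}. The products formula you single out is indeed the main technical ingredient (it is Theorem~11.12 in \cite{NiSp}), and your derivation of ``vanishing mixed cumulants $\Rightarrow$ freeness'' via the interval-block observation is the usual one. For the converse you take a slightly less direct route than the inductive proof in \cite{NiSp}: rather than computing mixed cumulants recursively, you invoke the fact that free independence together with the marginal distributions determines all joint moments, and then appeal to M\"obius uniqueness. This is perfectly valid and arguably cleaner, though it does rely on having already established the forward implication. One small remark: in your interval-block step you implicitly use the extension of the vanishing hypothesis from the generators $a_j$ to arbitrary elements of the generated algebras $\mathscr A_j$; you do supply this via the products formula in the preceding paragraph, but it is worth flagging that this extension is exactly where that formula is needed, since the statement of the theorem speaks only of cumulants in the $a_j$ themselves.
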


Analogously to  \cite{BL}, we will now define a free L\'evy white noise. Let us first recall several definitions related to the   Fock space. Let $H$ be a real separable Hilbert space. We denote by $\mathscr F(H)$ the {\it (full) Fock space over $H$:}
$$\mathscr F(H):=\bigoplus_{n=0}^\infty H^{\otimes n}.$$
where $H^{\otimes0}:=\R$. The vector $\Omega=(1,0,0,\dots)$ is called the {\it vacuum}.

For $h\in H$, we define a {\it creation operator $a^+(h)$} as a bounded linear operator in $\mathscr F(H)$ satisfying
$$a^+(h)f^{(n)}=h\otimes f^{(n)},\quad f^{(n)}\in H^{\otimes n}.$$
The adjoint of $a^+(h)$ is an {\it annihilation operator}, which satisfies
$$a^-(h)f_1\otimes\dots\otimes f_n=(h,f_1)_H\, f_2\otimes\dots\otimes f_n,\quad f_1,\dots,f_n\in H.$$
For a bounded linear operator $L$ in $H$, we define a bounded linear operator $a^0(L)$ in $\mathscr F(H)$ by
$$a^0(L)\Omega:=0,\qquad a^0(L)f_1\otimes\dots\otimes f_n=
(Lf_1)\otimes f_2\otimes\dots\otimes f_n,\quad   f_1,\dots,f_n\in H.$$

Let $\nu$ be a measure on $\R^*$ which satisfies
\begin{equation}\label{gydyre6}
\nu\big(\{s\in\R^*\mid |s|>R\}\big)=0\quad \text{for some }R>0
\end{equation}
and
\begin{equation}\label{yre64i}
\int_{\R^*}|s|\,d\nu(s)<\infty.
\end{equation}

\begin{remark}
In fact, instead of requiring  \eqref{gydyre6} and \eqref{yre64i}, we could have supposed that $\nu$ satisfies \eqref{yt5re7i}. However, the operators appearing bellow would be, generally speaking, unbounded and we would have to  specify their domain. Still  the results of the paper which hold under the assumptions \eqref{gydyre6} and \eqref{yre64i} would remain true under the weaker assumption \eqref{yt5re7i}.
\end{remark}

Let $f\in B_0(X)$. We define $f\otimes\operatorname{id}:X\times\R^*\to\R$, $f\otimes\operatorname{id}(x,s):=f(x)s$.
By \eqref{gydyre6} and \eqref{yre64i},
$$ f\otimes\operatorname{id}\in L^2(X\times\R^*,\sigma\otimes\nu)\cap L^1(X\times\R^*,\sigma\otimes\nu)\cap L^\infty(X\times\R^*,\sigma\otimes\nu). $$
So, we define a selfadjoint bounded  linear operator
\begin{equation}\label{vgfuut7fr8}
 A(f):=a^+(f\otimes\operatorname{id})+ a^0(f\otimes\operatorname{id})+a^-(f\otimes\operatorname{id})+\int_X f\,d \sigma \int_{\R^*}s\,d\nu(s)\end{equation}
in $\mathscr F(L^2(X\times\R^*,\sigma\otimes\nu))$. When defining the operator $a^0(f\otimes\operatorname{id})$ we identified the function $f\otimes\operatorname{id}$ with the operator of multiplication by this function acting in $L^2(X\times\R^*,\sigma\otimes\nu)$.

Let $\mathscr A$ denote the real algebra generated by the operators $(A(f))_{f\in B_0(X)}$. We define a  trace $\tau$ on $\mathscr A$ by
\begin{equation}\label{gutr7o}
\tau (a)=(a\Omega,\Omega)_{\mathscr F(L^2(X\times\R^*,\sigma\otimes\nu))},\quad a\in\mathscr A.\end{equation}
A straightforward calculation shows that, for any $f_1,\dots,f_n\in B_0(X)$, the $n$th free cumulant is given by
\begin{equation}\label{git8i}
 R^{(n)}\big(A(f_1),\dots,A(f_n)\big)=\int_{\R^*}s^n\,d\nu(s)\int_X f_1(x)f_2(x)\dotsm f_n(x)\,d\sigma(x),\quad n\in\mathbb N,\end{equation}
compare with \eqref{uytr75r}. Hence, by Theorem~\ref{tyde6e6}, for any $f_1,\dots,f_n\in B_0(X)$ such that
$f_if_j=0$ $\sigma$-a.e.\ if $i\ne j$, the noncommutative random variables  $A(f_1),\dots,A(f_n)$ are freely independent. In particular, for any mutually disjoint sets $\Lambda_1,\dots,\Lambda_n\in\mathscr B_0(X)$, the noncommutative random variables $A(\Lambda_1),\dots,A(\Lambda_n)$ are freely independent. Here, for $\Lambda\in\mathscr B_0(X)$, we denote $A(\Lambda):=A(\chi_\Lambda)$, where $\chi_\Lambda$ is the indicator function of $\Lambda$.

Furthermore, for any $\Lambda_1,\Lambda_2\in\mathscr B_0(X)$ with $\sigma(\Lambda_1)=\sigma(\Lambda_2)$, we have
$$\tau(A(\Lambda_1)^n)=\tau(A(\Lambda_2)^n)\quad \text{for all }n\in\mathbb N.$$
Hence the selfadjoint operators $A(\Lambda_1)$ and $A(\Lambda_2)$ have the same spectral measure at the vacuum state $\Omega$. Thus, we can think of the family of operators $(A(f))_{f\in B_0(X)}$ as a {\it free L\'evy white noise with L\'evy measure $\nu$}.

In the case where $\nu=\delta_1$, the operators $A(f)$
act in the Fock space $\mathscr F(L^2(X,\sigma))$ and have the form
\begin{equation}\label{ctre6u4i5r}
A(f):=a^+(f)+ a^0(f)+a^-(f)+\int_X f\,d \sigma. \end{equation}
Their free cumulants are given by
\begin{equation}\label{yufr76r}
 R^{(n)}\big(A(f_1),\dots,A(f_n)\big)=\int_X f_1(x)f_2(x)\dotsm f_n(x)\,d\sigma(x),\quad n\in\mathbb N,\end{equation}
compare with \eqref{uu7t}.
The operators $(A(f))_{f\in B_0(X)}$ are called a {\it free Poisson process on $X$ with intensity measure $\sigma$.}

In the case where condition \eqref{gydyre6} is satisfied, but instead of \eqref{yre64i} the weaker condition \eqref{gf7tr7} holds, 
we still have
$$ f\otimes\operatorname{id}\in L^2(X\times\R^*,\sigma\otimes\nu)\cap  L^\infty(X\times\R^*,\sigma\otimes\nu). $$
So, we define selfadjoint operators
\begin{equation}\label{yufr87r}
 \tilde A(f):=a^+(f\otimes\operatorname{id})+ a^0(f\otimes\operatorname{id})+a^-(f\otimes\operatorname{id})\end{equation}
in $\mathscr F(L^2(X\times\R^*,\sigma\otimes\nu))$.
We similarly define $(\tilde {\mathscr A},\tau)$ and we get
$R^{(1)}(\tilde A(f))=0$ and
$$ R^{(n)}\big(\tilde A(f_1),\dots,\tilde A(f_n)\big)=\int_{\R^*}s^n\,d\nu(s)\int_X f_1(x)f_2(x)\dotsm f_n(x)\,d\sigma(x),\quad n\ge2.$$
Hence $(\tilde A(f))_{f\in B_0(X)}$ is a {\it centered free L\'evy white noise with L\'evy measure $\nu$}, compare with Remarks~\ref{utr7or} and~\ref{uf7r7}.

\begin{remark} Let $X=\R$ and let $d\sigma(x)=dx$ be the Lebesgue measure on $\R$. Let $\nu$ be a measure on $\R^*$ which satisfies  \eqref{gf7tr7}, \eqref{gydyre6}. For each $t\ge0$, we define $L(t):=\tilde A(\chi_{[0,t]})$.
Then $(L(t))_{t\ge0}$ is a centered free L\'evy process without free Gaussian part, see \cite{BNT}. 
\end{remark}

\section{Freely independent particles}\label{jkgt8yt}

In this section we will construct  systems of freely independent particles in a finite volume.

\subsection{$N$ freely independent particles}\label{utr75}

Assume $\Lambda\in\mathscr B_0(X)$. Let $P_\Lambda$ be a probability measure on $\Lambda$ and let $N\in\mathbb N$, $N\ge 2$. Assume that $x_1,\dots,x_N$ are independent random points in $\Lambda$ which have distribution $P_\Lambda$. We consider a point process $\gamma=\sum_{i=1}^N\delta_{x_i}$. We wish to introduce a counterpart of such a point process in which the particles $x_1,\dots,x_N$ are freely independent. This is done by generalizing the approach of Ben Arous and Kargin \cite{BAK} in the case $X=\R$.

 If we fix a function $f\in B_0(X)$ and integrate $f$ with respect to $\gamma$, we get the random variable
\begin{equation}\label{tye6u}
\sum_{i=1}^{N}f(x_{i}),\end{equation}
which is a sum of $N$ independent identically distributed random variables $f(x_{i})$. Since each $x_i$ has distribution $P_\Lambda$, we may think of each  random variable $f(x_i)$ as the operator  of multiplication by the function $f$ acting in the Hilbert space $L^2(\Lambda,P_\Lambda)$. We denote this operator by $M(\Lambda;f)$.
Thus, we may think of the classical random variable \eqref{tye6u} as the bounded linear operator
$$\sum_{i=1}^{N}M_i(\Lambda;f)$$
acting in the tensor product $L^2(\Lambda,P_\Lambda)^{\otimes N}$,  where the $i$th operator $M_i(\Lambda;f)$ is the $M(\Lambda;f)$ operator acting  in the $i$th  space of the tensor product $L^2(\Lambda,P_\Lambda)^{\otimes N}$.

To construct the free version, let us consider the  algebra generated by the (commutative) operators $(M(\Lambda;f))_{f\in B_0(X)}$ in $L^2(\Lambda,P_\Lambda)$. We define a trace $\tau_\Lambda$ on this algebra by setting, for any
$f_1,\dots,f_k\in B_0(X)$,
\begin{equation}\label{vytr57}
\tau_\Lambda\big(M(\Lambda;f_1)\dotsm M(\Lambda;f_k)\big):=
(M(\Lambda;f_1)\dotsm M(\Lambda;f_k)1,1)_{L^2(\Lambda, P_\Lambda)}
=\int_\Lambda f_1\dotsm f_k\,dP_\Lambda.\end{equation}

The next step is to construct the free product of $N$ copies of  this (commutative) $W^*$-probability space, see e.g.\ \cite{Maassen,NiSp}. For the reader's convenience, let us briefly recall this construction.

We define a Hilbert space
$$ H:=\left\{f\in L^2(\Lambda,P)\mid
\langle f\rangle=0\right\},$$
where $\langle f\rangle:=\int_\Lambda f\,dP_\Lambda$ and the Hilbert space $H$ is equipped with the scalar  product of $L^2(\Lambda,P_\Lambda)$.
Let $H_1,\dots,H_N$ denote $N$ copies of the space $H$. 
 We define a Hilbert space
$$\mathscr H(\Lambda,N):=\R\oplus\bigoplus_{\substack{k\in\mathbb N\\l_1,\dots,l_k\in\{1,\dots,N\}\\
l_j\ne l_{j+1},\,\, j=1,\dots,k-1}}H_{l_1}\otimes H_{l_2}\otimes\dots\otimes
H_{l_k}.$$
We set $\Psi_N:=(1,0,0,\dots)\in\mathscr H(\Lambda,N)$. For any $i\in\{1,\dots,N\}$ and $f\in B_0(X)$, we define a bounded linear operator $M_i(\Lambda;f)$ in $\mathscr H(\Lambda,N)$ by setting
$$ M_i(\Lambda;f)\Psi_N:=\langle f\rangle\Psi_N+[f]_i,$$
where $[f]_i:=f-\langle f\rangle\in H_i$, and for any $k\in\mathbb N$ and any $g_j\in H_{l_j}$, $j=1,\dots,k$ with $l_j\ne l_{j+1}$, we set: if  $i\ne l_1$,
$$M_i(\Lambda;f)g_1\otimes g_2\otimes\dots\otimes g_k:=
\langle f\rangle\, g_1\otimes g_2\otimes\dots\otimes g_k+
[f]_i\otimes g_1\otimes g_2\otimes\dots\otimes g_k
 $$
and if $i=l_1$,
 $$M_i(\Lambda;f)g_1\otimes g_2\otimes\dots\otimes g_k:=\langle fg_1\rangle\, g_2\otimes\dots\otimes g_k +[fg_1]_i\otimes g_2\otimes\dots\otimes g_k.$$

 We denote by $\mathscr M(\Lambda,N)$ the algebra generated by all the operators $M_i(\Lambda;f)$ in\linebreak  $\mathscr H(\Lambda,N)$, and by $\mathscr M_i(\Lambda,N)$ the subalgebra of $\mathscr M(\Lambda,N)$ that is generated by  the operators $M_i(\Lambda;f)$ with a fixed $i\in\{1,2,\dots,N\}$.
We define a trace $\tau_{\Lambda,N}$ on $\mathscr M(\Lambda,N)$ by
\begin{equation}\label{uyro67}
\tau_{\Lambda,N}(a):=(a\Psi_N,\Psi_N)_{\mathscr H(\Lambda,N)},\quad a\in\mathscr M(\Lambda,N).\end{equation}
Then the subalgebras $\mathscr M_i(\Lambda,N)$ with $i=1,\dots,N$
are freely independent in the $W^*$-probability space  $(\mathscr M(\Lambda,N),\tau_{\Lambda,N})$.
Furthermore, for each fixed $i$, we have
\begin{equation}\label{uftu7t6}
\tau_{\Lambda,N}\big(M_i(\Lambda;f_1)\dotsm M_i(\Lambda;f_k)\big)
=\tau_\Lambda\big(M(\Lambda;f_1)\dotsm M(\Lambda;f_k)\big).\end{equation}

For each $f\in B_0(X)$, we now set
\begin{equation}\label{bhyguyt}
A(\Lambda,N;f):=\sum_{i=1}^{N}M_i(\Lambda;f).\end{equation}
The operators $\big(A(\Lambda,N;f)\big)_{f\in B_0(X)}$ may be thought of as a system of $N$ freely independent particles in $\Lambda$ such that each particle has distribution $P_\Lambda$.
We denote by $\mathscr A(\Lambda,N)$ the subalgebra of $\mathscr M(\Lambda,N)$ that is generated by the operators $\big(A(\Lambda,N;f)\big)_{f\in B_0(X)}$. Thus, we have constructed the $W^*$-probability space $(\mathscr A(\Lambda,N),\tau_{\Lambda,N})$.

\subsection{Poisson-distributed random number of freely independent particles}\label{tre46u}

Next, let us consider the case where the number of particles, $N$, is random and has Poisson distribution with parameter $\alpha>0$.
To realize this situation, we proceed as follows. For a Hilbert space $\mathscr H$ and a constant $c>0$, we denote by $\mathscr Hc$ the Hilbert space which coincides with $\mathscr H$ as a set but the scalar product in $\mathscr Hc$ is equal to the scalar product in $\mathscr H$ times $c$.

We also denote $\mathscr H(\Lambda,0):=\R$, $\Psi_0:=1\in \mathscr H(\Lambda,0)$,
and $A(\Lambda,0;f):=0$ for all $f\in B_0(X)$. This corresponds to the case when there are no particles in $\Lambda$. In particular, $\Psi_0$ may be thought of as the indicator function of an empty set.

We define a Hilbert space
$$\mathscr H(\Lambda):=\bigoplus_{N=0}^\infty\mathscr H(\Lambda,N)\frac{\alpha^N}{N!}\,e^{-\alpha}.$$
We denote by $\mathscr H_{\mathrm{fin}}(\Lambda)$ the dense subset of $\mathscr H(\Lambda)$ which consists of all finite sequences $(F_0,F_1,\dots,F_k,0,0,\dots)$ with  $F_i\in \mathscr H(\Lambda,i)$, $i=1,\dots,k$,  $k\in\mathbb N$.

For each $f\in B_0(X)$, we define a (Hermitian) linear operator $A(\Lambda;f)$ in $\mathscr H(\Lambda)$ with domain  $\mathscr H_{\mathrm{fin}}(\Lambda)$ by
$$A(\Lambda;f)=\sum_{N=0}^\infty A(\Lambda,N;f),$$
where each operator $A(\Lambda,N;f)$ acts in $\mathscr H(\Lambda,N)$.
Evidently, each operator $A(\Lambda;f)$ maps $\mathscr H_{\mathrm{fin}}(\Lambda)$ into itself.

 We denote by $\mathscr A(\Lambda)$ the algebra generated by the operators $(A(\Lambda;f))_{f\in B_0(X)}$. A trace $\tau_\Lambda$ on $\mathscr A(\Lambda)$ is given by
\begin{equation}\label{vufr87}
\tau_\Lambda(a):=\sum_{N=0}^\infty (a\Psi_N,\Psi_N)_{\mathscr H(\Lambda,N)}\,\frac{\alpha^N}{N!}\,e^{-\alpha},\quad a\in \mathscr A(\Lambda).\end{equation}
As easily seen, the series in formula \eqref{vufr87} indeed converges for each $a\in \mathscr A(\Lambda)$.

\begin{remark}
As the operators $A(\Lambda;f)$ are unbounded, strictly speaking $(\mathscr A(\Lambda),\tau_\Lambda)$ is not a $W^*$-probability space. We will call it a {\it noncommutative probability space}.
\end{remark}

\subsection{Discrete measures with freely independent atoms\\ and weights}\label{bjftfryuf}

Let $\Delta\in\mathscr B_0(\R^*)$ and let $P_{\Lambda\times\Delta}$ be a probability measure on $\Lambda\times\Delta$. Let $N\in\mathbb N$, $N\ge2$. Assume that $(x_1,s_1),\dots,(x_N,s_N)$ are independent random points in $\Lambda\times\Delta$ which have distribution $P_{\Lambda\times\Delta}$. We consider a random discrete measure which has atoms at the points $x_i$ and weights $s_i$:
$\eta=\sum_{i=1}^N s_i\delta_{x_i}$.
Analogously to subsec.~\ref{utr75}, we may now easily introduce a free analog of this random  measure.  Indeed, integrating  a function $f\in B_0(X)$ with respect to $\eta$, we get a random variable
\begin{equation}\label{ghf7i}
\sum_{i=1}^N s_if(x_i),\end{equation}
 which is a sum of $N$ independent identically distributed random variables $s_if(x_i)$.

 So, starting with the Hilbert space $L^2(\Lambda\times\Delta,P_{\Lambda\times\Delta})$ instead of $L^2(\Lambda,P_\Lambda)$,
we define a Hilbert space $\mathscr H(\Lambda\times\Delta,N)$ analogously to $\mathscr H(\Lambda,N)$.
Using the function $f\otimes\operatorname{id}$ instead of $f$, we define bounded linear operators $M_i(\Lambda\times\Delta;f)$ in
 $\mathscr H(\Lambda\times\Delta,N)$.
  Analogously to subsec.~\ref{utr75}, we define an algebra $\mathscr M(\Lambda\times\Delta,N)$,
a trace $\tau_{\Lambda\times\Delta}$ on it,  and subalgebras
$\mathscr M_i(\Lambda\times\Delta,N)$, which are freely independent.
Finally, we define operators $A(\Lambda\times\Delta,N;f)$ and the corresponding subalgebra $\mathscr A(\Lambda\times\Delta,N)$.
(We have used obvious notations.)
 The $W^*$-probability space $\big(\mathscr A(\Lambda\times\Delta,N),\tau_{\Lambda\times\Delta,N}\big)$ is the required free analog of the random measure \eqref{ghf7i}.

Let us also note that, analogously to subsec.~\ref{tre46u}, we may also construct a  noncommutative probability space corresponding to the case where the number of particles, $N$, in $\mathscr A(\Lambda\times\Delta,N)$ is random and has Poisson distribution with parameter $\alpha$. We denote the corresponding operators by $A(\Lambda\times\Delta;f)$, and the  noncommutative probability space by $(\mathscr A(\Lambda\times\Delta),\tau_{\Lambda\times\Delta})$.

\section{Approximations}\label{lngity8t}

In this section, we will formulate the main results of the paper, the proofs will be given in Section~\ref{yufu8} below.

Our first result is the free counterpart of Proposition~\ref{ye6i}.

\begin{theorem}[$N/V$ limit for the free Poisson process]\label{tyr7}
Let $\Lambda^{(n)}\in\mathscr B_0(X)$, $n\in\mathbb N$, and  $\Lambda^{(n)}\nearrow X$. Denote $V^{(n)}:=\sigma(\Lambda^{(n)})$, $V^{(n)}\to\infty$ as $n\to\infty$. Let $N^{(n)}\in\mathbb N$, $n\in\mathbb N$, be such that \eqref{nig8rf} holds.
For each $n\in\mathbb N$, consider the $W^*$-probability space
\begin{equation}\label{futfr75r7i}
\big(\mathscr A(\Lambda^{(n)},N^{(n)}),\tau_{\Lambda^{(n)},N^{(n)}}\big),
\end{equation}
constructed in subsec.~\ref{utr75}, which describes  $N^{(n)}$ freely independent particles in $\Lambda^{(n)}$ with distribution $\sigma^{(n)}=\frac1{V^{(n)}}\sigma$.
Then, as $n\to\infty$, the $W^*$-probability space \eqref{futfr75r7i}  converges to the $W^*$-probability space
 $(\mathscr A,\tau)$
 of the free Poisson process on $X$ with intensity $\sigma$. The convergence is in the sense of moments, i.e., for any $f_1,\dots,f_k\in B_0(X)$,
\begin{equation}\label{ufu8u}
\lim_{n\to\infty}\tau_{\Lambda^{(n)},N^{(n)}} \big( A(\Lambda^{(n)},N^{(n)};f_1)\dotsm A(\Lambda^{(n)},N^{(n)};f_k)\big)
=
\tau(A(f_1)\dotsm A(f_k)).\end{equation}
Here, for each $f\in B_0(X)$, $A(f)$ is the operator in $\mathscr F(L^2(X,\sigma))$ defined by \eqref{ctre6u4i5r}, and $\tau$ is given by formula \eqref{gutr7o} with $\nu=\delta_1$.
\end{theorem}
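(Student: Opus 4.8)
The plan is to establish the moment convergence \eqref{ufu8u} by instead proving convergence of the associated \emph{free cumulants}. By the moment--cumulant formula \eqref{vgjuftuf}, for each fixed $k$ the moment $\tau_{\Lambda^{(n)},N^{(n)}}\big(A(\Lambda^{(n)},N^{(n)};f_1)\dotsm A(\Lambda^{(n)},N^{(n)};f_k)\big)$ is one and the same polynomial, indexed by $\mathscr{NP}(k)$, in the free cumulants $R^{(j)}$ (of orders $j\le k$) of the operators $A(\Lambda^{(n)},N^{(n)};f_i)$; the same combinatorial expression produces the moments of the free Poisson process from its cumulants \eqref{yufr76r}. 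Hence it suffices to show that, for every $k$ and all $f_1,\dots,f_k\in B_0(X)$,
\[
\lim_{n\to\infty} R^{(k)}\big(A(\Lambda^{(n)},N^{(n)};f_1),\dots,A(\Lambda^{(n)},N^{(n)};f_k)\big)=\int_X f_1\dotsm f_k\,d\sigma .
\]

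Fix $n$ and write $V=V^{(n)}$, $N=N^{(n)}$, $\Lambda=\Lambda^{(n)}$. Using the definition \eqref{bhyguyt} and multilinearity of the free cumulants,
\[
R^{(k)}\big(A(\Lambda,N;f_1),\dots,A(\Lambda,N;f_k)\big)=\sum_{i_1,\dots,i_k=1}^N R^{(k)}\big(M_{i_1}(\Lambda;f_1),\dots,M_{i_k}(\Lambda;f_k)\big).
\]
Since the subalgebras $\mathscr M_i(\Lambda,N)$ are freely independent, Theorem~\ref{tyde6e6} forces every \emph{mixed} term (one in which $i_l\ne i_m$ for some $l,m$) to vanish, leaving only the diagonal contributions $i_1=\dots=i_k=:i$. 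By \eqref{uftu7t6} each diagonal term is independent of $i$ and equals the single-particle free cumulant $r_k:=R^{(k)}\big(M(\Lambda;f_1),\dots,M(\Lambda;f_k)\big)$ computed in the commutative $W^*$-probability space of \eqref{vytr57}. Therefore $R^{(k)}\big(A(\Lambda,N;f_1),\dots,A(\Lambda,N;f_k)\big)=N\,r_k$, which is the free analog of the additivity of cumulants under independence.

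It remains to estimate $r_k$. As the single-particle space is commutative, its moments are, by \eqref{vytr57} with $P_\Lambda=\frac1V\sigma$, simply $\tau_\Lambda\big(M(\Lambda;f_{i_1})\dotsm M(\Lambda;f_{i_j})\big)=\frac1V\int_\Lambda f_{i_1}\dotsm f_{i_j}\,d\sigma$, so every such moment is $O(1/V)$. I would now invert \eqref{vgjuftuf} and argue by induction on the cumulant order. Writing the moment--cumulant relation for the tuple $(f_1,\dots,f_k)$, the one-block partition contributes exactly $r_k$, while every other partition has at least two blocks, each of size strictly less than $k$; by the inductive hypothesis each such lower cumulant is $O(1/V)$, so every multi-block product is $O(1/V^2)$ and there are finitely many of them. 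This simultaneously shows that all lower cumulants are indeed $O(1/V)$ and gives
\[
r_k=\frac1V\int_\Lambda f_1\dotsm f_k\,d\sigma+O(1/V^2).
\]

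Combining the two displays, $R^{(k)}\big(A(\Lambda^{(n)},N^{(n)};f_1),\dots\big)=\frac{N}{V}\int_{\Lambda^{(n)}}f_1\dotsm f_k\,d\sigma+N\cdot O(1/V^2)$. As $n\to\infty$, condition \eqref{nig8rf} gives $N/V\to1$; since each $f_i$ has compact support and $\Lambda^{(n)}\nearrow X$, one has $\int_{\Lambda^{(n)}}f_1\dotsm f_k\,d\sigma=\int_X f_1\dotsm f_k\,d\sigma$ for all large $n$; and $N\cdot O(1/V^2)=O(1/V)\to0$. This yields the cumulant limit above and hence, through \eqref{vgjuftuf}, the moment convergence \eqref{ufu8u}. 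The single step demanding genuine care is the single-particle estimate: one must control the moment--cumulant inversion uniformly over the (finitely many) non-crossing partition structures, verifying that all lower-order cumulants are $O(1/V)$ so that every multi-block contribution is negligible against the leading $N\cdot\frac1V$ term; the rest is the standard dictionary between free independence, vanishing mixed cumulants, and additivity of free cumulants over freely independent summands.
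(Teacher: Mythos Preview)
Your proof is correct and follows essentially the same route as the paper: reduce \eqref{ufu8u} to convergence of free cumulants, use multilinearity together with Theorem~\ref{tyde6e6} to collapse the sum to $N^{(n)}$ times the single-particle cumulant, and then extract the leading $1/V^{(n)}$ term of the latter via the moment--cumulant inversion. The only cosmetic difference is that where you run an explicit induction to get $r_k=\frac1V\int_\Lambda f_1\dotsm f_k\,d\sigma+O(1/V^2)$, the paper invokes the inversion formula of Lemma~\ref{klnhuighi}(i) directly to obtain the expansion \eqref{jig8u}; the content is identical.
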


\begin{remark}
In the case $X=\R$, Theorem 2 in \cite{BAK} implies that the $W^*$-probability space  \eqref{futfr75r7i}  converges weakly  to the $W^*$-probability space  $(\mathscr A,\tau)$, i.e., for each $f\in B_0(X)$ and $k\in\mathbb N$,
\begin{equation}\label{lyi79ty}
\lim_{n\to\infty}\tau_{\Lambda^{(n)},N^{(n)}} \big( A(\Lambda^{(n)},N^{(n)};f)^k\big)=\tau(A(f)^k).\end{equation}
However, in the noncommutative setting, formula \eqref{lyi79ty} does not imply the stronger statement \eqref{ufu8u}.
\end{remark}

The following statement is a free analog of Remark~\ref{ftuyr6y7}.

\begin{corollary}[$N/V$ limit for the free L\'evy white noise]\label{nhiut87t866t9}
Assume that a measure $\nu$ on $\mathbb R^*$ satisfies \eqref{gydyre6} and \eqref{yre64i}.
Let $\Lambda^{(n)}\in\mathscr B_0(X)$, $\Lambda^{(n)}\nearrow X$, let $\Delta^{(n)}\in\mathscr B_0(\R^*)$,  $\Delta^{(n)}\nearrow \R^*$, let $V^{(n)}:=\sigma(\Lambda^{(n)})\nu(\Delta^{(n)})$, and let $N^{(n)}\in\mathbb N$ satisfy \eqref{nig8rf}. For each $n\in\mathbb N$, consider the $W^*$-probability space
\begin{equation}\label{gyutr7}\big(\mathscr A(\Lambda^{(n)}\times\Delta^{(n)},N^{(n)}),\tau_{\Lambda^{(n)}\times\Delta^{(n)},N^{(n)}}\big),\end{equation}
constructed in subsec.~\ref{bjftfryuf}, which describes  the sum of $N^{(n)}$ freely independent random measures $s_i\delta_{x_i}$, where each $(s_i,x_i)$ is a random point in $\Lambda^{(n)}\times\Delta^{(n)}$ with distribution
$\frac1{V^{(n)}}\sigma\otimes\nu$.
Then, as $n\to\infty$, the $W^*$-probability space \eqref{gyutr7}  converges
to the $W^*$-probability space $(\mathscr A,\tau)$
which describes the free L\'evy white noise with L\'evy measure $\nu$. The convergence is in the sense of moments, i.e., for any $f_1,\dots,f_k\in B_0(X)$,
\begin{align}&\lim_{n\to\infty}\tau_{\Lambda^{(n)}\times\Delta^{(n)},N^{(n)}} \big( A(\Lambda^{(n)}\times\Delta^{(n)},N^{(n)};f_1)\dotsm A(\Lambda^{(n)}\times\Delta^{(n)},N^{(n)};f_k)\big)\notag\\
&\quad=
\tau(A(f_1)\dotsm A(f_k)).\label{kjlogy8ty8}\end{align}
Here, for each $f\in B_0(X)$, $A(f)$ is the operator in $\mathscr F(L^2(X\times\R^*,\sigma\otimes\nu))$ defined by \eqref{vgfuut7fr8}, and $\tau$ is given by \eqref{gutr7o}.

If instead of \eqref{yre64i}, the measure $\nu$ satisfies the weaker assumption \eqref{gf7tr7}, we still have the following approximation of the centered free L\'evy white noise with L\'evy measure $\nu$: for any $f_1,\dots,f_k\in B_0(X)$,
\begin{align}
&\lim_{n\to\infty}\tau_{\Lambda^{(n)}\times\Delta^{(n)},N^{(n)}} \big( \tilde A(\Lambda^{(n)}\times\Delta^{(n)},N^{(n)};f_1)\dotsm \tilde A(\Lambda^{(n)}\times\Delta^{(n)},N^{(n)};f_k)\big)\notag\\
&\quad=
\tau(\tilde A(f_1)\dotsm \tilde A(f_k)).\label{gjuf7tut7}\end{align}
Here
$$ \tilde A(\Lambda\times\Delta,N;f):= A(\Lambda\times\Delta,N;f)-\tau_{\Lambda\times\Delta,N}(A(\Lambda\times\Delta,N;f))$$
and $\tilde A(f)$ is defined by \eqref{yufr87r}.
\end{corollary}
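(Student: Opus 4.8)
The plan is to reduce both assertions to the free Poisson $N/V$-limit of Theorem~\ref{tyr7}, applied on the enlarged space $Y:=X\times\R^*$ with reference measure $\sigma\otimes\nu$, in exact parallel with the way Remark~\ref{ftuyr6y7} extends the classical $N/V$-limit of Proposition~\ref{ye6i} by means of the embedding $\mathscr R$ of Remark~\ref{ur7o}. The first step is to record that $Y$ is again a locally compact Polish space, that $\sigma\otimes\nu$ is non-atomic (because $\sigma$ is) and infinite, and that $\Lambda^{(n)}\times\Delta^{(n)}\nearrow Y$ with $(\sigma\otimes\nu)(\Lambda^{(n)}\times\Delta^{(n)})=\sigma(\Lambda^{(n)})\,\nu(\Delta^{(n)})=V^{(n)}\to\infty$; thus every hypothesis of Theorem~\ref{tyr7} is met on $Y$, and $N^{(n)}$ still satisfies~\eqref{nig8rf}.

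The second step is the identification that drives the reduction. Comparing~\eqref{vgfuut7fr8} with~\eqref{ctre6u4i5r} shows that the free L\'evy white noise operator $A(f)$ on $X$ is exactly the free Poisson operator on $Y$ evaluated at the lifted function $f\otimes\operatorname{id}$; comparing the construction of subsec.~\ref{bjftfryuf} with that of subsec.~\ref{utr75} shows, in the same way, that $A(\Lambda^{(n)}\times\Delta^{(n)},N^{(n)};f)$ is the finite-$N$ particle operator on $Y$ built from $f\otimes\operatorname{id}$ and the distribution $\frac1{V^{(n)}}\,\sigma\otimes\nu$ on $\Lambda^{(n)}\times\Delta^{(n)}$. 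Hence the two sides of~\eqref{kjlogy8ty8} are precisely the two sides of~\eqref{ufu8u} for the space $Y$, with each $f_j$ replaced by $f_j\otimes\operatorname{id}$, and the first assertion follows at once from Theorem~\ref{tyr7}.

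The one point that needs care, and which I expect to be the main obstacle, is that $f\otimes\operatorname{id}$ is \emph{not} compactly supported on $Y$ (its support is $\operatorname{supp}(f)\times\R^*$), so Theorem~\ref{tyr7} does not apply verbatim to these functions. I would handle this by observing that each quantity occurring in the proof of Theorem~\ref{tyr7}, both on the finite-$N$ side and in the limit, is assembled from integrals of products $(f_{j_1}\otimes\operatorname{id})\cdots(f_{j_m}\otimes\operatorname{id})$ against $\sigma\otimes\nu$, that is, from $\int_{\R^*}s^m\,d\nu(s)\int_X f_{j_1}\cdots f_{j_m}\,d\sigma$; under~\eqref{gydyre6} and~\eqref{yre64i} one has $f\otimes\operatorname{id}\in L^1\cap L^2\cap L^\infty(Y,\sigma\otimes\nu)$, so all of these integrals are finite and the argument of Theorem~\ref{tyr7} goes through unchanged. (Equivalently, one may truncate to $X\times\Delta_m$ with $\Delta_m\nearrow\R^*$, apply Theorem~\ref{tyr7} to the genuinely compactly supported truncations, and pass to the limit, the required domination being supplied by the same integrability.)

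For the centered statement~\eqref{gjuf7tut7} under the weaker hypothesis~\eqref{gf7tr7}, I would argue at the level of cumulants. Since $\tilde A(\Lambda\times\Delta,N;f)$ and $\tilde A(f)$ differ from $A(\Lambda\times\Delta,N;f)$ and $A(f)$ only by scalar multiples of the identity, their free cumulants of order $\ge2$ are unchanged while the first cumulant becomes $0$; by the moment--cumulant formula~\eqref{vgjuftuf} the mixed moments in~\eqref{gjuf7tut7} are therefore polynomial expressions in the cumulants of order $\ge2$ alone, i.e.\ sums over non-crossing partitions all of whose blocks have at least two elements. These cumulants involve only the moments $\int_{\R^*}s^m\,d\nu(s)$ with $m\ge2$ and the $L^2$-pairings of the functions $f\otimes\operatorname{id}$, and $\int_{\R^*}|s|^m\,d\nu(s)\le R^{m-2}\int_{\R^*}s^2\,d\nu(s)<\infty$ for $m\ge2$ by~\eqref{gydyre6} and~\eqref{gf7tr7}. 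Hence the cumulant convergence established in the previous step survives the loss of the $L^1$-assumption~\eqref{yre64i}, and summing the convergent cumulants over the relevant non-crossing partitions yields~\eqref{gjuf7tut7}.
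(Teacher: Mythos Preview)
Your treatment of the first assertion~\eqref{kjlogy8ty8} is essentially the paper's argument, just phrased as a reduction rather than a repetition: the paper does not invoke Theorem~\ref{tyr7} on $Y=X\times\R^*$ as a black box, but simply reruns the cumulant computation of that proof with $f_j$ replaced by $f_j\otimes\operatorname{id}$ and $\sigma$ by $\sigma\otimes\nu$, obtaining the analogue of~\eqref{jig8u} (namely~\eqref{ytdr6t}) and then~\eqref{gufr7yr}. Your remark that $f\otimes\operatorname{id}\notin B_0(Y)$ but that only finiteness of the moments $\int_{\R^*}|s|^m\,d\nu$ is actually used is exactly what makes this go through.

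For the centered statement~\eqref{gjuf7tut7} your strategy is again the paper's---via Lemma~\ref{klnhuighi}\,(ii), moments of the centered variables are sums over non-crossing partitions with all blocks of size $\ge2$, so it suffices to prove convergence of the free cumulants of order $k\ge2$. However, your justification for why that convergence persists without~\eqref{yre64i} has a genuine gap. You claim that the relevant cumulants ``involve only the moments $\int_{\R^*}s^m\,d\nu$ with $m\ge2$''. This is true for the \emph{limiting} cumulants~\eqref{git8i}, but not for the finite-$N$ side. By~\eqref{jkgtu8t}, the single-particle cumulant
\[
R^{(k)}_{\Lambda^{(n)}\times\Delta^{(n)}}\bigl(M(\Lambda^{(n)}\times\Delta^{(n)};f_1),\dots,M(\Lambda^{(n)}\times\Delta^{(n)};f_k)\bigr)
\]
is a signed sum over \emph{all} non-crossing partitions of $\{1,\dots,k\}$, and every singleton block $\{i\}$ contributes a factor $\frac1{V^{(n)}}\int_{\Lambda^{(n)}}f_i\,d\sigma\int_{\Delta^{(n)}}s\,d\nu(s)$. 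Under~\eqref{gf7tr7} alone, $\int_{\Delta^{(n)}}s\,d\nu$ may be unbounded in $n$, so boundedness of the error terms $c_j^{(n)}$ in~\eqref{ytdr6t} is no longer automatic. The paper closes this gap with the observation (e.g.\ by Cauchy--Schwarz and~\eqref{gf7tr7}) that
\[
\sup_{n\in\mathbb N}\,\frac1{\nu(\Delta^{(n)})}\left|\int_{\Delta^{(n)}}s\,d\nu(s)\right|<\infty,
\]
which yields $\sup_n|c_j^{(n)}|/\nu(\Delta^{(n)})^j<\infty$ for $j\ge2$, and hence $c_j^{(n)}/(V^{(n)})^j=c_j^{(n)}/\bigl(\nu(\Delta^{(n)})^j\sigma(\Lambda^{(n)})^j\bigr)\to0$ because $\sigma(\Lambda^{(n)})\to\infty$. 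Once you insert this estimate, your argument is complete and coincides with the paper's.
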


According to Proposition~\ref{fyer6}, in  the classical case the measure $\pi_\Lambda$ is precisely the restriction  to $\Lambda$ of the Poisson point process $\pi$. As will be seen from the proof of Theorem~\ref{hgf7utr76tr6} below,
 a direct analog of this fact is not true in the free case, i.e., the free counterpart of $\pi_\Lambda$ is not the restriction to $\Lambda$  of the free Poisson process. 
 
\begin{theorem}[approximation of the free Poisson process]\label{hgf7utr76tr6}
Let $\Lambda^{(n)}\in\mathscr B_0(X)$, $n\in\mathbb N$, and  $\Lambda^{(n)}\nearrow X$, $\sigma(\Lambda^{(n)})\to\infty$ as $n\to\infty$. For each $n\in\mathbb N$, consider the noncommutative probability space $(\mathscr A(\Lambda^{(n)}),\tau_{\Lambda^{(n)}})$, constructed in subsec.~\ref{tre46u}, which describes  $N^{(n)}$ freely independent particles in $\Lambda^{(n)}$ with distribution $\sigma^{(n)}$, where $N^{(n)}$ is a random number which has Poisson distribution with parameter $\sigma(\Lambda^{(n)})$.
Then, as $n\to\infty$, the noncommutative probability space $(\mathscr A(\Lambda^{(n)}),\tau_{\Lambda^{(n)}})$ converges to the $W^*$-probability space
 $(\mathscr A,\tau)$
 of the free Poisson process on $X$ with intensity $\sigma$. The convergence is in the sense of moments, i.e., for any $f_1,\dots,f_k\in B_0(X)$,
\begin{equation}\label{kigt8uy6t}
\lim_{n\to\infty}\tau_{\Lambda^{(n)}} \big( A(\Lambda^{(n)};f_1)\dotsm A(\Lambda^{(n)};f_k)\big)
=
\tau(A(f_1)\dotsm A(f_k)).\end{equation}
Here, the operators $A(f_i)$ and the state $\tau$ are as in Theorem~\ref{tyr7}.
\end{theorem}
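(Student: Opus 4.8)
The plan is to reduce everything to the free moment--cumulant formula \eqref{vgjuftuf} and then to track the powers of $V:=\sigma(\Lambda)$ that appear; recall here the Poisson parameter is $\alpha=V$ and each particle has distribution $P_\Lambda=\frac1V\sigma|_\Lambda$. Since each operator $A(\Lambda,N;f)$ acts inside $\mathscr H(\Lambda,N)$ and leaves it invariant, the operator $A(\Lambda;f)=\sum_N A(\Lambda,N;f)$ is block-diagonal along the decomposition $\mathscr H(\Lambda)=\bigoplus_N\mathscr H(\Lambda,N)\frac{V^N}{N!}e^{-V}$, so a product of such operators is block-diagonal with blocks the corresponding products. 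Hence by \eqref{vufr87},
\begin{equation*}
\tau_\Lambda\big(A(\Lambda;f_1)\dotsm A(\Lambda;f_k)\big)=\sum_{N=0}^\infty \frac{V^N}{N!}\,e^{-V}\,\tau_{\Lambda,N}\big(A(\Lambda,N;f_1)\dotsm A(\Lambda,N;f_k)\big).
\end{equation*}
First I would expand the inner trace by \eqref{vgjuftuf} over $\theta\in\mathscr{NP}(k)$. For a block $B=\{i_1<\dots<i_m\}$, multilinearity of the free cumulants and $A(\Lambda,N;f)=\sum_{i=1}^N M_i(\Lambda;f)$ give $R^{(m)}\big(A(\Lambda,N;f_{i_1}),\dots\big)=\sum_{j_1,\dots,j_m}R^{(m)}\big(M_{j_1}(\Lambda;f_{i_1}),\dots\big)$; by the free independence of the subalgebras $\mathscr M_i(\Lambda,N)$ and Theorem~\ref{tyde6e6}, every mixed term vanishes, and by \eqref{uftu7t6} each surviving diagonal term equals the single-particle cumulant. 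Thus the block contributes exactly $N\,R^{(m)}_\Lambda\big(M(\Lambda;f_{i_1}),\dots,M(\Lambda;f_{i_m})\big)$, where $R^{(m)}_\Lambda$ denotes the free cumulant computed in the commutative space $(L^2(\Lambda,P_\Lambda),\tau_\Lambda)$ of \eqref{vytr57}.

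The second step is to identify the two competing scales. Summing over $N$ produces the Poisson moment $\sum_N\frac{V^N}{N!}e^{-V}N^{\,|\theta|}$, which is a polynomial in $V$ of degree $|\theta|$ with leading coefficient $1$ (the $|\theta|$-th moment of a Poisson variable), so it equals $V^{\,|\theta|}+O(V^{\,|\theta|-1})$. On the other side, the single-particle moments are $\tau_\Lambda\big(M(\Lambda;f_{j_1})\dotsm M(\Lambda;f_{j_l})\big)=\frac1V\int_\Lambda f_{j_1}\dotsm f_{j_l}\,d\sigma$, hence every moment is $O(V^{-1})$; Möbius inversion on the non-crossing partition lattice then writes $R^{(m)}_\Lambda$ as a finite sum over $\mathscr{NP}(m)$ whose term indexed by a $q$-block partition is $O(V^{-q})$. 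The dominant (one-block) term yields
\begin{equation*}
R^{(m)}_\Lambda\big(M(\Lambda;f_{i_1}),\dots,M(\Lambda;f_{i_m})\big)=\frac1V\int_\Lambda f_{i_1}\dotsm f_{i_m}\,d\sigma+O(V^{-2}).
\end{equation*}

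Combining, for $\theta\in\mathscr{NP}(k)$ with $p=|\theta|$ blocks the total contribution is
\begin{equation*}
\Big(V^{p}+O(V^{p-1})\Big)\Big(V^{-p}\prod_{B\in\theta}\int_\Lambda\prod_{i\in B}f_i\,d\sigma+O(V^{-p-1})\Big),
\end{equation*}
in which the leading powers cancel exactly while every remaining term carries a net negative power of $V$. Since $\mathscr{NP}(k)$ is finite I may pass to the limit term by term; as $\Lambda^{(n)}\nearrow X$ dominated convergence gives $\int_{\Lambda^{(n)}}\prod_{i\in B}f_i\,d\sigma\to\int_X\prod_{i\in B}f_i\,d\sigma$, so the left-hand side of \eqref{kigt8uy6t} tends to $\sum_{\theta\in\mathscr{NP}(k)}\prod_{B\in\theta}\int_X\prod_{i\in B}f_i\,d\sigma$, which by \eqref{yufr76r} is precisely the free-cumulant expansion of $\tau(A(f_1)\dotsm A(f_k))$. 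The main obstacle is exactly this bookkeeping: one must verify that the factor $V^{p}$ from the $p$-th Poisson moment offsets the $V^{-p}$ from the product of $p$ single-particle cumulants, and that both the subleading Poisson-moment terms and the $O(V^{-2})$ cumulant corrections genuinely vanish as $V\to\infty$. It is the persistence of these nonzero finite-$V$ corrections that shows $(\mathscr A(\Lambda),\tau_\Lambda)$ is \emph{not} the restriction of the free Poisson process, making the infinitude of $\sigma$ (i.e.\ $V\to\infty$) indispensable.
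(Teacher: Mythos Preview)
Your argument is correct and shares its opening move with the paper: both use the block-diagonality of $A(\Lambda;f)$ to write the $\tau_\Lambda$-moment as a Poisson average of $\tau_{\Lambda,N}$-moments, then expand each $\tau_{\Lambda,N}$-moment via \eqref{vgjuftuf}, and use free independence (Theorem~\ref{tyde6e6}) together with \eqref{uftu7t6} to reduce every block-cumulant to $N$ times the corresponding single-particle cumulant $R^{(m)}_\Lambda$. This produces exactly the paper's formula \eqref{nbfgu}.

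Where you diverge is in the asymptotic bookkeeping. The paper proceeds by an inductive peeling (Lemma~\ref{klouy9i}): it isolates the partitions with $|\theta|=1,2,\dots$ one at a time, at each step invoking the Stirling identity $N^{m}=\sum_{j}\operatorname S(m,j)(N)_j$ and comparing the count $\operatorname S(i,l)$ of \emph{all} coarsenings with the count $\operatorname{NS}(\theta,l)$ of \emph{non-crossing} coarsenings to show the remainder $r_m^{(n)}\to0$. You instead use the global estimate that the $p$th Poisson moment equals $V^{p}+O(V^{p-1})$ and that each $R^{(m)}_\Lambda=\tfrac1V\int_\Lambda f_{i_1}\dotsm f_{i_m}\,d\sigma+O(V^{-2})$ via M\"obius inversion, so that for each $\theta$ the contribution is $\prod_{B\in\theta}I^{(n)}(B;f_1,\dots,f_k)+O(V^{-1})$ with constants uniform in $n$. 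This is shorter and conceptually cleaner for the purpose of the limit theorem; the paper's layer-by-layer argument, while more laborious, makes the cancellation mechanism explicit and shows precisely how the non-crossing structure enters through $\operatorname{NS}(\theta,l)$, which is what underlies the remark that $(\mathscr A(\Lambda),\tau_\Lambda)$ is not the restriction to $\Lambda$ of the free Poisson process.
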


The corresponding result also holds for a free L\'evy white noise.

\begin{corollary}[approximation of the free L\'evy white noise]\label{kigy8u}
Assume that a measure $\nu$ on $\mathbb R^*$ satisfies \eqref{gydyre6} and \eqref{yre64i}.
Let $\Lambda^{(n)}\in\mathscr B_0(X)$, $\Lambda^{(n)}\nearrow X$, let $\Delta^{(n)}\in\mathscr B_0(\R^*)$,  $\Delta^{(n)}\nearrow \R^*$, and let $V^{(n)}:=\sigma(\Lambda^{(n)})\nu(\Delta^{(n)})$.
For each $n\in\mathbb N$, consider the noncommutative probability space
\begin{equation}\label{bhjgfyu8}
(\mathscr A(\Lambda^{(n)}\times\Delta^{(n)}),\tau_{\Lambda^{(n)}\times\Delta^{(n)}}),\end{equation}
 constructed in subsec.~\ref{bjftfryuf}, which describes  a sum of $N^{(n)}$ freely independent random measures $s_i\delta_{x_i}$, where each $(s_i,x_i)$ is a random point in $\Lambda^{(n)}\times\Delta^{(n)}$ with distribution
$\frac1{V^{(n)}}\sigma\otimes\nu$, and $N^{(n)}$ is a random number which has Poisson distribution with parameter $V^{(n)}$.
Then, as $n\to\infty$, the noncommutative probability space \eqref{bhjgfyu8} converges (in the sense of moments) to the $W^*$-probability space $(\mathscr A,\tau)$
which describes the free L\'evy white noise with L\'evy measure $\nu$.

If instead of \eqref{yre64i}, the measure $\nu$ satisfies the weaker assumption \eqref{gf7tr7}, we still have the following approximation of the centered free L\'evy white noise with L\'evy measure $\nu$: for any $f_1,\dots,f_k\in B_0(X)$,
\begin{equation}\label{hjuft}
\lim_{n\to\infty}\tau_{\Lambda^{(n)}\times\Delta^{(n)}} \big( \tilde A(\Lambda^{(n)}\times\Delta^{(n)};f_1)\dotsm \tilde A(\Lambda^{(n)}\times\Delta^{(n)};f_k)\big)\\
=
\tau(\tilde A(f_1)\dotsm \tilde A(f_k)).\end{equation}
Here $\tilde A(f)$ is defined by \eqref{yufr87r}, and 
$$\tilde A(\Lambda^{(n)}\times\Delta^{(n)};f)=\sum_{N=0}^\infty \tilde A(\Lambda^{(n)}\times\Delta^{(n)},N;f),$$ where the operator
$$\tilde A(\Lambda^{(n)}\times\Delta^{(n)},N;f):=A(\Lambda^{(n)}\times\Delta^{(n)},N;f)-
\tau_{\Lambda^{(n)}\times\Delta^{(n)},N}\big(A(\Lambda^{(n)}\times\Delta^{(n)},N;f)\big)$$
acts in $\mathscr H(\Lambda^{(n)}\times\Delta^{(n)},N)$.

\end{corollary}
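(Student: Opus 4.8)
The plan is to deduce the corollary from Theorem~\ref{hgf7utr76tr6} by a change of underlying space. By its very construction in subsec.~\ref{bjftfryuf}, the noncommutative probability space \eqref{bhjgfyu8} is exactly the space built in subsec.~\ref{tre46u}, but carried out over the locally compact Polish space $X\times\R^*$, with reference measure $\sigma\otimes\nu$, with the exhausting sets $\Lambda^{(n)}\times\Delta^{(n)}\nearrow X\times\R^*$, and with every symbol $f\in B_0(X)$ replaced by $f\otimes\operatorname{id}$. The Poisson parameter of the $n$th space is then $V^{(n)}=(\sigma\otimes\nu)(\Lambda^{(n)}\times\Delta^{(n)})=\sigma(\Lambda^{(n)})\nu(\Delta^{(n)})\to\infty$, since $\sigma(\Lambda^{(n)})\to\infty$ and $\nu(\Delta^{(n)})$ is eventually bounded away from $0$. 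Thus I would first record that the first assertion of the corollary is precisely Theorem~\ref{hgf7utr76tr6} applied on $X\times\R^*$, the only caveat being that $f\otimes\operatorname{id}\notin B_0(X\times\R^*)$ because its support is unbounded in the $s$-variable; under \eqref{gydyre6} and \eqref{yre64i} one has $f\otimes\operatorname{id}\in L^1\cap L^2\cap L^\infty(X\times\R^*,\sigma\otimes\nu)$ and $\int_{\R^*}|s|^k\,d\nu(s)<\infty$ for all $k$, so the argument still goes through.

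To make this precise I would re-run that argument. The mixed moment of the commuting single-particle multiplication operators with symbols $f_i\otimes\operatorname{id}$ equals $\frac1{V^{(n)}}\int_{\Delta^{(n)}}s^k\,d\nu(s)\int_{\Lambda^{(n)}}f_1\cdots f_k\,d\sigma=O\big(1/V^{(n)}\big)$; Möbius inversion over the non-crossing lattice then shows that the corresponding single-particle free cumulant agrees with this moment up to $O\big(1/(V^{(n)})^2\big)$. Free-cumulant additivity over the $N$ freely independent copies multiplies this by $N$, and averaging over the Poisson law of $N$, whose $m$th moment has leading term $(V^{(n)})^m$, exactly cancels the factor $(V^{(n)})^{-|\theta|}$ coming from a non-crossing partition $\theta$ with $|\theta|$ blocks. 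Passing to the limit, using $\Delta^{(n)}\nearrow\R^*$ with dominated convergence for the $s$-integrals and the eventual inclusion of $\operatorname{supp}f_i$ in $\Lambda^{(n)}$ for the $x$-integrals, the left-hand side of the (uncentered) moment identity converges to $\sum_{\theta\in\mathscr{NP}(k)}\prod_{B\in\theta}\big(\int_{\R^*}s^{|B|}\,d\nu\int_X\prod_{i\in B}f_i\,d\sigma\big)$, which by the moment-cumulant formula \eqref{vgjuftuf} together with \eqref{git8i} equals $\tau(A(f_1)\cdots A(f_k))$.

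For the centered statement \eqref{hjuft} under the weaker assumption \eqref{gf7tr7}, I would invoke the fact that replacing $a$ by $a-\tau(a)\mathbf1$ kills the first free cumulant and leaves all cumulants of order $\ge2$ unchanged (a free cumulant vanishes once one of its arguments is a scalar). Applied in each $N$-sector, this means the moment-cumulant expansion of $\tau_{\Lambda^{(n)}\times\Delta^{(n)}}(\tilde A(\cdots;f_1)\cdots\tilde A(\cdots;f_k))$ runs only over those $\theta\in\mathscr{NP}(k)$ having no singleton block, so the only $s$-integrals that occur are $\int_{\R^*}s^{n}\,d\nu(s)$ with $n\ge2$, all finite under \eqref{gydyre6} and \eqref{gf7tr7}. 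The same leading-order computation then yields the limit $\sum_{\theta}\prod_{B\in\theta}\big(\int_{\R^*}s^{|B|}\,d\nu\int_X\prod_{i\in B}f_i\,d\sigma\big)$, now summed over singleton-free $\theta$, which is exactly $\tau(\tilde A(f_1)\cdots\tilde A(f_k))$ for the operators \eqref{yufr87r}, whose first cumulant vanishes and whose higher cumulants are given by the same formula.

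I expect the main obstacle to be precisely the non-compactly-supported symbol $f\otimes\operatorname{id}$: one has to verify that \eqref{gydyre6} and \eqref{yre64i} (respectively \eqref{gf7tr7}) are exactly the integrability conditions needed to keep every moment and cumulant finite and to justify interchanging the limit $n\to\infty$ with the finite partition sum, and, in the centered case, that subtracting the ($N$-dependent) mean removes precisely the first-order contribution that would otherwise diverge while preserving the finite higher cumulants.
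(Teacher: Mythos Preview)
Your proposal is correct and follows essentially the same route as the paper: the first statement is obtained by re-running the proof of Theorem~\ref{hgf7utr76tr6} on the underlying space $X\times\R^*$ with reference measure $\sigma\otimes\nu$ (the paper simply says it ``trivially follows from the proof of Theorem~\ref{hgf7utr76tr6}''), and the centered statement is handled exactly as you describe, via Lemma~\ref{klnhuighi}~(ii), so that only partitions in $\mathscr{NP}(k;2)$ (no singleton blocks) survive and all remaining $s$-integrals have order $\ge2$ and are finite under \eqref{gydyre6} and \eqref{gf7tr7}. Your explicit flagging of the fact that $f\otimes\operatorname{id}\notin B_0(X\times\R^*)$, and your observation that the required moment bounds nonetheless hold because \eqref{gydyre6} forces $\sup_n\int_{\Delta^{(n)}}|s|^m\,d\nu(s)<\infty$ for every $m$, is a point the paper leaves implicit.
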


\begin{remark}
The results of this section may be seen as the free analogs of equivalence of ensembles for the ideal gas, cf.\  \cite{Ellis}.

\end{remark}

\section{Proofs}\label{yufu8}

The results presented in the lemma below are well known and can be easily derived from the definition of a free cumulant.

\begin{lemma}\label{klnhuighi}
Let $\mathscr A$ be an algebra and let $\tau$ be a trace on  $\mathscr A$. Let the free cumulants on $\mathscr A$ be defined through \eqref{vgjuftuf}.
 The following statements hold.

(i) For each $n\in\mathbb N$, there exist $c_\theta\in\mathbb Z$ with  $\theta\in\mathscr{NP}(n)$ such that 
 \begin{equation}\label{jkgtu8t}
 R^{(n)}(a_1,\dots,a_n)=\sum_{\theta\in\mathscr{NP}(n)}\prod_{B\in\theta}c_\theta\,\tau(B,a_1,\dots,a_n), \quad a_1,\dots,a_n\in\mathscr A.\end{equation}
Here, for each $B=\{i_1,\dots,i_k\}\subset\{1,2,\dots,n\}$ with $i_1<i_2<\dots<i_k$,
$$\tau(B;a_1,\dots,a_n):=\tau(a_{i_1}\dotsm a_{i_k}).$$
Furthermore, for the partition $\theta$ which has only one element, $\{1,2,\dots,n\}$, we have  $c_\theta=1$.

(ii) For any $a\in\mathscr A$, denote $\tilde a:=a-\tau(a)$. Then $R^{(1)}(\tilde a)=0$, any for any $a_1,\dots,a_n\in\mathscr A$ with $n\ge2$,
\begin{equation}\label{kjgiy}
R^{(n)}(\tilde a_1,\dots,\tilde a_n)=R^{(n)}(a_1,\dots,a_n).\end{equation}

\end{lemma}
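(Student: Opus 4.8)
The plan is to treat the two parts separately, deriving everything from the moment--cumulant relation \eqref{vgjuftuf} by induction on $n$.

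For part (i), I would argue by induction on $n$, the case $n=1$ being $R^{(1)}(a_1)=\tau(a_1)$. In \eqref{vgjuftuf} the single-block partition $\theta_0:=\{\{1,\dots,n\}\}$ contributes exactly $R^{(n)}(a_1,\dots,a_n)$, while every other $\theta\in\mathscr{NP}(n)$ has at least two blocks, each of size strictly less than $n$. Solving \eqref{vgjuftuf} for the single-block term gives
\[
R^{(n)}(a_1,\dots,a_n)=\tau(a_1\dotsm a_n)-\sum_{\theta\ne\theta_0}\prod_{B\in\theta}R(B;a_1,\dots,a_n),
\]
and into each factor $R(B;\,\cdot\,)=R^{(|B|)}(\cdots)$ with $|B|<n$ I substitute the inductive representation as an integer combination of products of moments. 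Expanding, $R^{(n)}$ becomes an integer combination of terms $\prod_{B\in\theta}\tau(B;a_1,\dots,a_n)$ indexed by $\theta\in\mathscr{NP}(n)$, which is \eqref{jkgtu8t}. (Conceptually this is just M\"obius inversion on the lattice $\mathscr{NP}(n)$, with $c_\theta$ the corresponding value of the M\"obius function, automatically an integer.) The coefficient of the single-block term $\tau(a_1\dotsm a_n)$ is $1$: it arises only from the explicit first summand, since every substituted contribution is a product of at least two moments, each taken over a proper subset of $\{1,\dots,n\}$, and so corresponds to a partition with $\ge 2$ blocks, i.e.\ to some $\theta\ne\theta_0$.

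For part (ii), first $R^{(1)}(\tilde a)=\tau(\tilde a)=0$ because $\tau(\mathbf 1)=1$. For $n\ge 2$ the two ingredients are multilinearity of $R^{(n)}$ and the vanishing of cumulants with a unit entry. Multilinearity is immediate from \eqref{jkgtu8t}: in each summand every index $i$ occurs in exactly one factor $\tau(B;\,\cdot\,)$, and linearly so. Granting the unit-vanishing claim, I expand $R^{(n)}(\tilde a_1,\dots,\tilde a_n)$ by multilinearity over the choices $\tilde a_i\in\{a_i,\,-\tau(a_i)\mathbf 1\}$; every term in which at least one slot equals the scalar multiple $-\tau(a_i)\mathbf 1$ has a unit entry and hence vanishes, leaving only $R^{(n)}(a_1,\dots,a_n)$, which is \eqref{kjgiy}.

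It remains to prove the unit-vanishing claim, which I expect to be the main obstacle: for $n\ge 2$, $R^{(n)}(b_1,\dots,b_n)=0$ whenever some $b_j=\mathbf 1$. I would prove this by induction on $n$, the base case being $R^{(2)}(a,\mathbf 1)=\tau(a)-\tau(a)\tau(\mathbf 1)=0$. Fix $j$ with $b_j=\mathbf 1$ and apply \eqref{vgjuftuf} to $\tau(b_1\dotsm b_n)=\tau(a_1\dotsm\widehat{a_j}\dotsm a_n)$, the hat denoting omission. Splitting $\mathscr{NP}(n)$ according to the block $B_0\ni j$: if $B_0=\{j\}$ is a singleton, then $R(B_0;\,\cdot\,)=R^{(1)}(\mathbf 1)=1$ and the remaining blocks range over all of $\mathscr{NP}\big(\{1,\dots,n\}\setminus\{j\}\big)$ (adjoining a singleton never creates a crossing), so these terms sum to $\tau(a_1\dotsm\widehat{a_j}\dotsm a_n)$; if $|B_0|\ge 2$ and $\theta\ne\theta_0$, then $2\le|B_0|<n$ and $R(B_0;\,\cdot\,)$ is a lower-order cumulant with a unit entry, hence $0$ by the inductive hypothesis; and $\theta_0$ contributes exactly $R^{(n)}(b_1,\dots,b_n)$. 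Equating the two sides cancels the common moment and yields $R^{(n)}(b_1,\dots,b_n)=0$. The only delicate points are the singleton bijection and the bookkeeping of which partitions survive; the rest is routine.
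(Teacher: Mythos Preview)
Your proof is correct and complete. The paper itself does not actually prove this lemma: it simply asserts that ``the results presented in the lemma below are well known and can be easily derived from the definition of a free cumulant'' and moves on. Your argument supplies precisely those details---the inductive inversion of \eqref{vgjuftuf} for part~(i) (which, as you note, is just M\"obius inversion on $\mathscr{NP}(n)$), and for part~(ii) the standard route via multilinearity and the vanishing of mixed cumulants containing a unit---and is the expected way to fill this gap. The only step worth flagging as requiring a moment's thought is the one you already identified: in part~(i), when you substitute the inductive expressions for the lower-order cumulants, the resulting products of moments are indexed by refinements of a non-crossing $\theta$ that are non-crossing within each block, and one must observe that any such refinement is again globally non-crossing; this is immediate once stated.
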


\begin{proof}[Proof of Theorem \ref{tyr7}]
By \eqref{vgjuftuf} and \eqref{jkgtu8t},  formula \eqref{ufu8u} is equivalent to the following statement: for any $f_1,\dots,f_k\in B_0(X)$, $k\in\mathbb N$,
\begin{equation}\label{jkohiuy}
\lim_{n\to\infty} R^{(k)}_{\Lambda^{(n)},N^{(n)}}\big(
A(\Lambda^{(n)},N^{(n)};f_1),\dots,A(\Lambda^{(n)},N^{(n)};f_k)
\big)=R^{(k)}(A(f_1),\dots,A(f_k))\end{equation}
(we used obvious notations). By Theorem~\ref{tyde6e6} and  formulas \eqref{uftu7t6}, \eqref{bhyguyt}, we get
\begin{align}
&R^{(k)}_{\Lambda^{(n)},N^{(n)}}\big(A(\Lambda^{(n)},N^{(n)};f_1),\dots,A(\Lambda^{(n)},N^{(n)};f_k)\big)\notag\\
&\quad=\sum_{i_1=1}^{N^{(n)}}\dotsm\sum_{i_k=1}^{N^{(n)}}R^{(k)}_{\Lambda^{(n)},N^{(n)}}\big(
M_{i_1}(\Lambda^{(n)};f_1),\dots, M_{i_k}(\Lambda^{(n)};f_k)\big)\notag\\
&\quad=\sum_{i=1}^{N^{(n)}}R^{(k)}_{\Lambda^{(n)},N^{(n)}}\big(
M_{i}(\Lambda^{(n)};f_1),\dots, M_{i}(\Lambda^{(n)};f_k)\big)\notag\\
&\quad=N^{(n)}R_{\Lambda^{(n)}}^{(k)}\big(
M(\Lambda^{(n)};f_1),\dots,M(\Lambda^{(n)};f_k)\big).\label{uuft7tr}
\end{align}
Here, for $f\in B_0(X)$, $M(\Lambda^{(n)};f)$ is the operator of multiplication by the function $f$ in $L^2(\Lambda^{(n)},\sigma^{(n)})$, the (commutative) algebra generated by these operators is equipped with the trace $\tau_{\Lambda^{(n)}}$, see \eqref{vytr57}, and $R_{\Lambda^{(n)}}^{(k)}$, $k\in\mathbb N$ are the corresponding free cumulants.

By Lemma~\ref{klnhuighi}, (i) and the formula \eqref{vytr57} with $\Lambda=\Lambda^{(n)}$ and $P_\Lambda=\sigma^{(n)}=\frac1{V^{(n)}}\sigma$, there exist  bounded sequences of real numbers, $(c^{(n)}_2)_{n=1}^\infty,\dots,(c^{(n)}_k)_{n=1}^\infty$,  which depend on $f_1,\dots,f_k$ but are independent of $N^{(n)}$, such that
\begin{equation}\label{jig8u}
R_{\Lambda^{(n)}}^{(k)}\big(
M(\Lambda^{(n)};f_1),\dots,M(\Lambda^{(n)};f_k)\big)=\frac1{V^{(n)}}\int_{\Lambda^{(n)}} f_1(x)\dotsm f_k(x)\,d\sigma(x)+\sum_{j=2}^k
\frac{c^{(n)}_j}{(V^{(n)})^j}.
\end{equation}
By \eqref{uuft7tr} and \eqref{jig8u}, we get
$$\lim_{n\to\infty} R^{(k)}_{\Lambda^{(n)},N^{(n)}}\big(A(\Lambda^{(n)},N^{(n)};f_1),\dots,A(\Lambda^{(n)},N^{(n)};f_k)\big)=
\int_X f_1(x)\dotsm f_k(x)\,d\sigma(x),$$
which, in view of \eqref{yufr76r}, gives \eqref{jkohiuy}.
\end{proof}

\begin{proof}[Proof of Corollary \ref{nhiut87t866t9}]
The proof of formula \eqref{kjlogy8ty8} is similar to the proof of \eqref{ufu8u}. Indeed, analogously to \eqref{jig8u}, we get 
\begin{align}
&R_{\Lambda^{(n)}\times\Delta^{(n)}}^{(k)}\big(
M(\Lambda^{(n)}\times \Delta^{(n)};f_1),\dots,M(\Lambda^{(n)}\times\Delta^{(n)};f_k)\big)\notag\\
&\quad=\frac1{V^{(n)}}\int_{\Lambda^{(n)}} f_1(x)\dotsm f_k(x)\,d\sigma(x)
\int_{\Delta^{(n)}}s^k\,d\nu(s)
+\sum_{j=2}^k
\frac{c_j^{(n)}}{(V^{(n)})^j},\label{ytdr6t}
\end{align}
which implies
\begin{align}
&\lim_{n\to\infty} R^{(k)}_{\Lambda^{(n)}\times\Delta^{(n)},N^{(n)}}\big(A(\Lambda^{(n)}\times\Delta^{(n)},N^{(n)};f_1),
\dots,A(\Lambda^{(n)}\times\Delta^{(n)},N^{(n)};f_k)\big)\notag\\
&\quad =
\int_X f_1(x)\dotsm f_k(x)\,d\sigma(x)\int_{\R^*}s^k\,d\nu(s).\label{gufr7yr}\end{align}
By \eqref{git8i}, formula \eqref{kjlogy8ty8} holds.

Formula \eqref{gjuf7tut7} is proven as follows. In view of Lemma~\ref{klnhuighi}, (ii), it suffices to show that, under the assumptions \eqref{gydyre6} and \eqref{gf7tr7}, formula \eqref{gufr7yr} holds for $k\ge2$. Note that
$$\sup_{n\in\mathbb N}\left(\frac1{\nu(\Delta^{(n)})}\int_{\Delta^{(n)}}s\,d\nu(s)\right)<\infty.$$
Using this, one can easily show that the constants $c_j^{(n)}$ from \eqref{ytdr6t} satisfy, for $j\ge2$, 
$$\sup_{n\in\mathbb N}\frac{|c_j^{(n)}|}{\nu(\Delta^{(n)})^j}<\infty.$$
Therefore,
$$\frac{|c_j^{(n)}|}{(V^{(n)})^j}=\frac{|c_j^{(n)}|}{\nu(\Delta^{(n)})^j\,\sigma(\Lambda^{(n)})^j}\to0\quad\text{as }n\to\infty,$$
which  implies \eqref{gufr7yr} for $k\ge2$.
\end{proof}

\begin{proof}[Proof of Theorem \ref{hgf7utr76tr6}]
For $n\in\mathbb N$, denote $V^{(n)}:=\sigma(\Lambda^{(n)})$.
Let $f_1,\dots,f_k\in B_0(X)$.  We have, by \eqref{vgjuftuf}, \eqref{vufr87} and \eqref{uuft7tr},
\begin{align}
&\tau_{\Lambda^{(n)}}\big(
A(\Lambda^{(n)};f_1)\dotsm A(\Lambda^{(n)};f_k)\big)\notag\\
&\quad=\sum_{N=1}^\infty \tau_{\Lambda^{(n)},N}\big(
A(\Lambda^{(n)},N;f_1)\dotsm A(\Lambda^{(n)},N;f_k)\big)\,\frac{(V^{(n)})^N}{N!}e^{-V^{(n)}}\notag\\
&\quad=e^{-V^{(n)}}\sum_{N=1}^\infty \frac{(V^{(n)})^N}{N!} \sum_{\theta\in\mathscr{NP}(k)}\prod_{B\in\theta}R_{\Lambda^{(n)},N}\big(B;
A(\Lambda^{(n)},N;f_1),\dots, A(\Lambda^{(n)},N;f_k)\big)\notag\\
&\quad=e^{-V^{(n)}}\sum_{i=1}^k \sum_{N=1}^\infty \frac{(V^{(n)})^N N^{i-1}}{(N-1)!}
\sum_{\substack{\theta\in\mathscr{NP}(k)\\ |\theta|=i}} R_{\Lambda^{(n)}}\big(\theta;
M(\Lambda^{(n)};f_1),\dots, M(\Lambda^{(n)};f_k)\big).\label{nbfgu}
\end{align}
Here $|\theta|$ denotes the number of sets in the partition $\theta$ and 
\begin{equation*}
 R_{\Lambda^{(n)}}\big(\theta;
M(\Lambda^{(n)};f_1),\dots, M(\Lambda^{(n)};f_k)\big):=\prod_{B\in\theta}R_{\Lambda^{(n)}}\big(B;
M(\Lambda^{(n)};f_1),\dots, M(\Lambda^{(n)};f_k)\big). \end{equation*}

 Let us consider the term in the sum $\sum_{i=1}^k$ in \eqref{nbfgu} which corresponds to $i=1$. 
 The only partition $\theta\in\mathscr{NP}(k)$ with $|\theta|=1$ is $\theta=\big\{\{1,\dots,k\}\big\}$.
 Hence, by \eqref{vgjuftuf} and \eqref{vytr57}, this term is equal to 
\begin{align}
& e^{-V^{(n)}}\sum_{N=1}^\infty \frac{(V^{(n)})^N }{(N-1)!}\, R^{(k)}_{\Lambda^{(n)}}\big(
M(\Lambda^{(n)};f_1),\dots, M(\Lambda^{(n)};f_k)\big)\notag\\
&\quad=  V^{(n)}\bigg(\tau_{\Lambda^{(n)}}\big(
M(\Lambda^{(n)};f_1)\dotsm M(\Lambda^{(n)};f_k)\big)\notag\\
&\qquad- \sum_{\substack{\theta\in\mathscr{NP}(k)\\|\theta|\ge2}}
R_{\Lambda^{(n)}}\big(\theta;M(\Lambda^{(n)};f_1),\dots, M(\Lambda^{(n)};f_k)\big)\bigg)\notag\\
&\quad=\int_{\Lambda^{(n)}} f_1(x)\dotsm f_k(x)\,d\sigma(x)\notag\\
&\qquad-V^{(n)}\sum_{\substack{\theta\in\mathscr{NP}(k)\\|\theta|\ge2}}
R_{\Lambda^{(n)}}\big(\theta;
M(\Lambda^{(n)};f_1),\dots, M(\Lambda^{(n)};f_k)\big).\label{nbjghi}
\end{align}
By   \eqref{nbfgu} and \eqref{nbjghi}, we get
\begin{align}
&\tau_{\Lambda^{(n)}}\big(
A(\Lambda^{(n)};f_1)\dotsm A(\Lambda^{(n)};f_1)\big)
=\int_{\Lambda^{(n)}} f_1(x)\dotsm f_k(x)\,d\sigma(x)\notag\\
&\quad+e^{-V^{(n)}}\sum_{i=2}^k\sum_{N=2}^\infty \frac{(V^{(n)})^N (N^{i-1}-1)}{(N-1)!}
 \sum_{\substack{\theta\in\mathscr{NP}(k)\\ |\theta|=i}} R_{\Lambda^{(n)}}\big(\theta;
M(\Lambda^{(n)};f_1),\dots, M(\Lambda^{(n)};f_k)\big).\label{hjgcukF}
\end{align}

We denote, for each partition $\theta\in\mathscr{NP}(k)$ and $n\in\mathbb N$,
\begin{equation}\label{hjfut}
 I^{(n)}(\theta; f_1,\dots,f_k):=\prod_{B\in\theta} I^{(n)}(B; f_1,\dots,f_k),\end{equation}
where for $B=\{i_1,\dots,i_l\}\in\theta$
\begin{equation}\label{ioy89}
I^{(n)}(B; f_1,\dots,f_k)=\int_{\Lambda^{(n)}} f_{i_1}(x)\dotsm f_{i_l}(x)\,d\sigma(x).\end{equation}
Consider any $\theta\in\mathscr{NP}(k)$ with $|\theta|=2$. We have
\begin{align}
&R_{\Lambda^{(n)}}\big(\theta;
M(\Lambda^{(n)};f_1),\dots, M(\Lambda^{(n)};f_k)\big)=\frac1{(V^{(n)})^2}\,I^{(n)}(\theta; f_1,\dots,f_k)\notag\\
&\qquad- \sum_{\substack{\pi\in\mathscr{NP}(k)\\ |\pi|\ge3,\, \pi\le\theta}}R_{\Lambda^{(n)}}\big(\pi;
M(\Lambda^{(n)};f_1),\dots, M(\Lambda^{(n)};f_k)\big).\label{iyt867tr}
\end{align}
Here $\pi\le\theta$ denotes that the partition $\pi$ is finer than the partition $\theta$, i.e., each element of $\pi$ is a subset of some element of $\theta$.

Recall that, for $i,j\in\mathbb N$, $i\ge j$, the {\it Stirling number of the second kind}, $\operatorname{S}(i,j)$, denotes  the number of ways to partition a set of $i$ elements into $j$ nonempty subsets.
Let $\pi\in\mathscr{NP}(k)$ with $|\pi|\ge3$. For $l\in\{1,\dots,k\}$ with $l\le |\pi|$, we denote by $\operatorname{NS}
(\pi,l)$ the number of non-crossing partitions $\theta\in \mathscr{NP}(k)$ such that $|\theta|=l$ and $\pi\le\theta$.
 Note that the number of all  partitions $\theta$ of $\{1,\dots,k\}$ such that $|\theta|=l$ and $\pi\le\theta$ is equal to $\operatorname{S}(|\pi|,l)$. Hence, $\operatorname{NS}(\pi,l)\le \operatorname{S}(|\pi|,l)$.

By \eqref{hjgcukF}--\eqref{iyt867tr}, we get
\begin{align}
&\tau_{\Lambda^{(n)}}\big(
A(\Lambda^{(n)};f_1)\dotsm A(\Lambda^{(n)};f_1)\big)
-\sum_{i=1,2}\,\sum_{\substack{\theta\in\mathscr{NP}(k)\\ |\theta|=i}}I^{(n)}(\theta;f_1,\dots, f_k),\notag\\
&\quad=e^{-V^{(n)}}\sum_{i=3}^k\sum_{N=2}^\infty \frac{(V^{(n)})^N }{(N-1)!}
\sum_{\substack{\theta\in\mathscr{NP}(k)\\ |\theta|=i}} R_{\Lambda^{(n)}}\big(\theta;
M(\Lambda^{(n)};f_1),\dots, M(\Lambda^{(n)};f_k)\big)\notag\\
&\qquad\times \big(N^{i-1}-1-(N-1)\operatorname{NS}(\theta,2)\big)\notag\\
&\quad=e^{-V^{(n)}}\sum_{i=3}^k\sum_{N=2}^\infty \frac{(V^{(n)})^N }{(N-1)!}\, \big(N^{i-1}-1-(N-1)\operatorname{S}(i,2)\big)
\notag\\
&\qquad\times \sum_{\substack{\theta\in\mathscr{NP}(k)\\ |\theta|=i}} R_{\Lambda^{(n)}}\big(\theta;
M(\Lambda^{(n)};f_1),\dots, M(\Lambda^{(n)};f_k)\big)\notag\\
&\qquad+(V^{(n)})^2
\sum_{i=3}^k\sum_{\substack{\theta\in\mathscr{NP}(k)\\ |\theta|=i}} R_{\Lambda^{(n)}}\big(\theta;
M(\Lambda^{(n)};f_1),\dots, M(\Lambda^{(n)};f_k)\big)(\operatorname{S}(i,2)-\operatorname{NS}(\theta,2)).\label{kjio}
\end{align}
Note that
\begin{equation}\label{jhgiuty}
0\le \operatorname{S}(i,2)-\operatorname{NS}(\theta,2)\le \operatorname{S}(k,2).\end{equation}
Furthermore, as easily seen from Lemma~\ref{klnhuighi}, (i), there exists a constant $C_2>0$ such that, for each $i\in\{3,\dots,k\}$ and $n\in\mathbb N$,
\begin{equation}\label{bhuyfu8}
\sum_{\substack{\theta\in\mathscr{NP}(k)\\ |\theta|=i}}\left| R_{\Lambda^{(n)}}\big(\theta;
M(\Lambda^{(n)};f_1),\dots, M(\Lambda^{(n)};f_k)\big)\right|\le\frac{C_2}{(V^{(n)})^3}\,. \end{equation}
Therefore, by \eqref{kjio}--\eqref{bhuyfu8},
\begin{align}
&\tau_{\Lambda^{(n)}}\big(
A(\Lambda^{(n)};f_1)\dotsm A(\Lambda^{(n)};f_1)\big)
=\sum_{\substack{\theta\in\mathscr{NP}(k)\\ |\theta|\le2}}I^{(n)}(\theta;f_1,\dots, f_k)\notag\\
&\qquad+e^{-V^{(n)}}\sum_{i=3}^k\sum_{N=2}^\infty \frac{(V^{(n)})^N }{(N-1)!}\, \big(N^{i-1}-1-(N-1)\operatorname{S}(i,2)\big)
\notag\\
&\qquad\times \sum_{\substack{\theta\in\mathscr{NP}(k)\\ |\theta|=i}} R_{\Lambda^{(n)}}\big(\theta;
M(\Lambda^{(n)};f_1),\dots, M(\Lambda^{(n)};f_k)\big)+r_2^{(n)},\label{klt87t}
\end{align}
where $\lim_{n\to\infty}r_2^{(n)}=0$.

This procedure can be iterated, which is shown in the following lemma. 
Below we will use the standard notation $(N)_j:=N(N-1)(N-2)\dotsm(N-j+1)$, the  {\it falling factorial}.

\begin{lemma}\label{klouy9i}
 For each $m\in\{2,\dots,k\}$, we have
\begin{align}
&\tau_{\Lambda^{(n)}}\big(
A(\Lambda^{(n)};f_1)\dotsm A(\Lambda^{(n)};f_k)\big)
=\sum_{\substack{\theta\in\mathscr{NP}(k)\\ |\theta|\le m}}I^{(n)}(\theta;f_1,\dots, f_k)\notag\\
&\qquad+e^{-V^{(n)}}\sum_{i=m+1}^k\sum_{N=2}^\infty \frac{(V^{(n)})^N\,K(N,i,m) }{(N-1)!}\notag\\
&\qquad\times \sum_{\substack{\theta\in\mathscr{NP}(k)\\ |\theta|=i}} R_{\Lambda^{(n)}}\big(\theta;
M(\Lambda^{(n)};f_1),\dots, M(\Lambda^{(n)};f_k)\big)+r_{m}^{(n)},\label{hfgtutr}
\end{align}
where $\lim_{n\to\infty}r_{m}^{(n)}=0$ and
\begin{equation}\label{yur87}
K(N,i,m):=N^{i-1}-1-(N-1)_1\operatorname{S}(i,2)-(N-1)_2 \operatorname{S}(i,3)-\dots-(N-1)_{m-1}\operatorname{S}(i,m).\end{equation}

In particular, for $m=k$, 
\begin{equation}\label{ghfr7}
\tau_{\Lambda^{(n)}}\big(A(\Lambda^{(n)};f_k)\dotsm A(\Lambda^{(n)};f_1)\big)
=\sum_{\theta\in\mathscr{NP}(k)}I^{(n)}(\theta;f_1,\dots, f_k)+r_k^{(n)}.\end{equation}
Here $I(\theta;f_1,\dots, f_k)$  is defined analogously to \eqref{hjfut}, \eqref{ioy89} with $\Lambda^{(n)}$ being replaced with $X$. 
\end{lemma}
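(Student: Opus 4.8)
The plan is to establish formula \eqref{hfgtutr} by induction on $m$, starting from the base case $m=2$ which is exactly \eqref{klt87t}, already proven. The inductive step should peel off the contribution of partitions with exactly $m$ blocks from the remaining error sum and absorb it into the explicit main term $\sum_{|\theta|\le m}I^{(n)}(\theta;\dots)$, while leaving behind a new error sum over partitions with at least $m+1$ blocks, together with a term that vanishes as $n\to\infty$.

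Let me sketch the inductive step. Assume \eqref{hfgtutr} holds for some $m$ with $2\le m<k$. The next thing I would do is isolate, in the double sum $\sum_{i=m+1}^k\sum_{N\ge2}$, the index $i=m+1$, i.e. the partitions with exactly $m+1$ blocks. For a fixed $\theta$ with $|\theta|=m+1$, I would expand $R_{\Lambda^{(n)}}(\theta;\dots)$ via \eqref{vgjuftuf}/\eqref{iyt867tr} to extract its leading term $\frac{1}{(V^{(n)})^{m+1}}\,I^{(n)}(\theta;f_1,\dots,f_k)$, the remainder being a sum over strictly finer partitions $\pi\le\theta$ with $|\pi|\ge m+2$. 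The leading term, once multiplied by the factor $e^{-V^{(n)}}\sum_{N}\frac{(V^{(n)})^N K(N,m+1,m)}{(N-1)!}$, should produce exactly $I^{(n)}(\theta;f_1,\dots,f_k)$ up to a vanishing error: the key identity here is that the coefficient $K(N,m+1,m)$ equals $(N-1)_m\,\operatorname{S}(m+1,m+1)=(N-1)_m$, because by construction \eqref{yur87} the polynomial $N^{i-1}$ has been stripped of all its lower falling-factorial components $(N-1)_j\operatorname{S}(i,j)$ for $j<m$, and the classical Stirling identity $N^{i-1}=\sum_{j}(N-1)_{\dots}\operatorname{S}(\dots)$ leaves precisely $(N-1)_m$ when $i=m+1$. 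Then $e^{-V^{(n)}}\sum_{N\ge2}\frac{(V^{(n)})^N (N-1)_m}{(N-1)!}=e^{-V^{(n)}}\sum_{N\ge m+1}\frac{(V^{(n)})^N}{(N-m-1)!}=(V^{(n)})^{m+1}$, so after multiplying by $\frac{1}{(V^{(n)})^{m+1}}$ this returns exactly $I^{(n)}(\theta;\dots)$, which is what gets added to promote $\sum_{|\theta|\le m}$ to $\sum_{|\theta|\le m+1}$.

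What remains to control is twofold. First, for each $\theta$ with $|\theta|=m+1$, the difference between $K(N,m+1,m)$ and $(N-1)_m$ must be shown to contribute only a vanishing term; but the point above is that this difference is identically zero, so no error arises from the main part — the only vanishing contributions come from the finer-partition remainders. Second, the finer remainders (the $\pi\le\theta$ with $|\pi|\ge m+2$ terms from expanding each $R_{\Lambda^{(n)}}(\theta;\dots)$ with $|\theta|=m+1$) together with the reindexed tail $\sum_{i=m+2}^k$ must be recombined into the new error sum with coefficient $K(N,i,m+1)$. This is a bookkeeping step governed by the recursion $K(N,i,m+1)=K(N,i,m)-(N-1)_m\operatorname{S}(i,m+1)$, matching \eqref{yur87}. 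The estimates \eqref{jhgiuty} and \eqref{bhuyfu8}, applied in the same manner as in passing from \eqref{kjio} to \eqref{klt87t}, guarantee that all the terms reshuffled into $r_{m+1}^{(n)}$ indeed tend to zero; the crucial input is the bound of order $(V^{(n)})^{-i}$ on the sum of free cumulants $R_{\Lambda^{(n)}}(\theta;\dots)$ over $|\theta|=i$, which makes the combination $e^{-V^{(n)}}\sum_N \frac{(V^{(n)})^N}{(N-1)!}(\text{polynomial in }N)$ times an $O((V^{(n)})^{-i})$ quantity stay bounded, while the mismatched Stirling/$\operatorname{NS}$ contributions carry an extra power of $1/V^{(n)}$.

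The main obstacle I anticipate is the precise combinatorial bookkeeping in the inductive step — verifying that the falling-factorial coefficients $K(N,i,m)$ transform exactly according to \eqref{yur87} under the recursion, and that the Stirling-number identity together with $e^{-V}\sum_N (V)^N(N-1)_m/(N-1)! = V^{m+1}$ produces the clean cancellation that turns the leading cumulant term into $I^{(n)}(\theta;\dots)$. Everything else (the vanishing of $r_m^{(n)}$) reduces to the uniform cumulant bound \eqref{bhuyfu8} and the crude Stirling estimate \eqref{jhgiuty}, reused verbatim. Finally, the special case $m=k$ gives \eqref{ghfr7}, since then the error double-sum $\sum_{i=k+1}^k$ is empty and the integrals over $\Lambda^{(n)}$ in $I^{(n)}(\theta;\dots)$ converge, by monotone/dominated convergence, to the corresponding integrals over $X$ as $\Lambda^{(n)}\nearrow X$, yielding the announced limit with the free-Poisson cumulants \eqref{yufr76r} reconstructed through \eqref{vgjuftuf}.
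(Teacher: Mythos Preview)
Your proposal is correct and follows essentially the same approach as the paper's proof: induction on $m$ from the base case \eqref{klt87t}, the key Stirling identity $K(N,m+1,m)=(N-1)_m$ together with $e^{-V^{(n)}}\sum_{N}\frac{(V^{(n)})^N(N-1)_m}{(N-1)!}=(V^{(n)})^{m+1}$, the expansion \eqref{fchydtr6} of $R_{\Lambda^{(n)}}(\theta;\dots)$ for $|\theta|=m+1$, the recursion $K(N,i,m+1)=K(N,i,m)-(N-1)_m\operatorname{S}(i,m+1)$, and the control of $r_{m+1}^{(n)}$ via the $\operatorname{S}/\operatorname{NS}$ mismatch and the cumulant bound of order $(V^{(n)})^{-(m+2)}$. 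The only minor imprecision is the loose statement of the Stirling identity; the exact form used is $N^{m}=\sum_{j=1}^{m+1}\operatorname{S}(m+1,j)(N-1)_{j-1}$, obtained from $N^{m+1}=\sum_{j}\operatorname{S}(m+1,j)(N)_j$ upon dividing by $N$.
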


\begin{proof} We prove formula \eqref{hfgtutr} by induction on $m$. We have already shown that \eqref{hfgtutr} holds for $m=2$, see \eqref{klt87t}. So assume that  \eqref{hfgtutr} holds for $m\in\{2,\dots,k-1\}$, and we have to prove \eqref{hfgtutr}
for $m+1$. The key observation here is that
\begin{equation}\label{ng8ut}
 K(N,m+1,m)=(N-1)_m,\end{equation}
 which is equivalent to the following classical identity for the Stirling numbers of the second kind:
 $$N^{m}=\sum_{j=1}^m \operatorname{S}(m,j)(N)_j,$$
 see e.g.\ \cite[Theorem 13.5]{vLW}. The rest of the proof is similar to the way we derived formula \eqref{klt87t} from \eqref{hjgcukF}.

 Indeed, for each $\theta\in\mathscr{NP}(k)$ with $|\theta|=m+1$, we have
 \begin{align}
&R_{\Lambda^{(n)}}\big(\theta;
M(\Lambda^{(n)};f_1),\dots, M(\Lambda^{(n)};f_k)\big)=\frac1{(V^{(n)})^{m+1}}\,I^{(n)}(\theta; f_1,\dots,f_k)\notag\\
&\qquad- \sum_{\substack{\pi\in\mathscr{NP}(k)\\ |\pi|\ge m+2,\, \pi\le\theta}}R_{\Lambda^{(n)}}\big(\pi;
M(\Lambda^{(n)};f_1),\dots, M(\Lambda^{(n)};f_k)\big).\label{fchydtr6}
\end{align}
Then, by \eqref{hfgtutr}--\eqref{fchydtr6},
\begin{align}
&\tau_{\Lambda^{(n)}}\big(
A(\Lambda^{(n)};f_1)\dotsm A(\Lambda^{(n)};f_1)\big)
=\sum_{\substack{\theta\in\mathscr{NP}(k)\\ |\theta|\le m+1}}I^{(n)}(\theta; f_1,\dots,f_k)\notag\\
&\qquad+e^{-V^{(n)}}\sum_{i=m+2}^k\sum_{N=2}^\infty \frac{(V^{(n)})^N\,\big(K(N,i,m)-(N-1)_m \operatorname{S}(i,m+1)\big) }{(N-1)!}\notag\\
&\qquad\times \sum_{\substack{\theta\in\mathscr{NP}(k)\\ |\theta|=i}} R_{\Lambda^{(n)}}\big(\theta;
M(\Lambda^{(n)};f_1),\dots, M(\Lambda^{(n)};f_k)\big)+r_{m+1}^{(n)}\label{huyftdr}\end{align}
where
\begin{align}
&r_{m+1}^{(n)}:=e^{-V^{(n)}}\sum_{i=m+2}^k\sum_{N=2}^\infty\, \sum_{\substack{\theta\in\mathscr{NP}(k)\\ |\theta|=i}}
\frac{(V^{(n)})^N(N-1)_m}{(N-1)!}\big(S(i,m+1)-\operatorname{NS}(\theta,m+1)\big)\notag\\
&\qquad\times R_{\Lambda^{(n)}}\big(\theta;
M(\Lambda^{(n)};f_1),\dots, M(\Lambda^{(n)};f_k)\big)
+r_{m}^{(n)}.\label{iguy8t8u}
\end{align}
Note that, in \eqref{iguy8t8u},
$$0\le S(i,m+1)-\operatorname{NS}(\theta,m+1)\le S(k,m+1).$$
Hence, analogously to \eqref{bhuyfu8}, there exists a constant $C_{m+1}>0$ such that, for each $i\in\{m+2,\dots,k\}$ and $n\in\mathbb N$,
\begin{equation}\label{tytdeyre}
\sum_{\substack{\theta\in\mathscr{NP}(k)\\ |\theta|=i}}\left| R_{\Lambda^{(n)}}\big(\theta;
M(\Lambda^{(n)};f_1),\dots, M(\Lambda^{(n)};f_k)\big)\right|\le\frac{C_{m+1}}{(V^{(n)})^{m+2}}\,. \end{equation}
By \eqref{iguy8t8u} and \eqref{tytdeyre}, $\lim_{n\to\infty}r_{m+1}^{(n)}=0$.
The definition \eqref{yur87} gives rise to
$$K(N,i,m)-(N-1)_m S(i,m+1)=K(N,i,m+1).$$
Hence, by \eqref{huyftdr},  the induction step is proven.
\end{proof}

By \eqref{ghfr7}, 
$$\lim_{n\to\infty}\tau_{\Lambda^{(n)}}\big(A(\Lambda^{(n)};f_1)\dotsm A(\Lambda^{(n)};f_1)\big)=
\sum_{\theta\in\mathscr{NP}(k)}I(\theta;f_1,\dots, f_k).$$
But by \eqref{vgjuftuf}, \eqref{gydfy6er}, and \eqref{yufr76r},
$$\tau\big(A(f_1)\dotsm A(f_k)\big)=\sum_{\theta\in\mathscr{NP}(k)}I(\theta;f_1,\dots, f_k),$$
which implies \eqref{kigt8uy6t}.
\end{proof}

\begin{proof}[Proof of Corollary \ref{kigy8u}]
The first statement trivially follows from the proof of Theorem~\ref{hgf7utr76tr6}. To prove formula \eqref{hjuft}, we proceed as follows. Using Lemma \ref{klnhuighi}, we get,  analogously to \eqref{nbfgu},
\begin{align}
&\tau_{\Lambda^{(n)}}\big(
\tilde A(\Lambda^{(n)}\times \Delta^{(n)};f_1)\dotsm A(\Lambda^{(n)}\times \Delta^{(n)};f_k)\big)\notag\\
&\quad=e^{-V^{(n)}}\sum_{i=1}^k\sum_{N=1}^\infty \frac{(V^{(n)})^N N^{i-1}}{(N-1)!}
\sum_{\substack{\theta\in\mathscr{NP}(k;2)\\ |\theta|=i}} R_{\Lambda^{(n)}}\big(\theta;
M(\Lambda^{(n)};f_1),\dots, M(\Lambda^{(n)};f_k)\big).\label{jguyt7}
\end{align}
Here $\mathscr{NP}(k;2)$ denotes the collection of all non-crossing partitions $\theta$ of $\{1,\dots,k\}$ such that each set $B\in\theta$ has at least two elements. Completely analogously to the proof of Theorem \ref{hgf7utr76tr6}, we show that the right hand side of \eqref{jguyt7} converges to
$$\sum_{\theta\in\mathscr{NP}(k;2)}R(\theta;A(f_1),\dots,A(f_k)) =\tau(\tilde A(f_1)\dotsm\tilde A(f_k)).$$
\end{proof}

\begin{center}
{\bf Acknowledgements}\end{center}
M.B. and E.L. acknowledge the financial support of the Polish
National Science Center, grant no.\ DEC-2012/05/B/ST1/00626, and of
the SFB 701 `Spectral
structures and topological methods in mathematics,' Bielefeld University. M.B. was partially supported by the MAESTRO grant DEC-2011/02/A/ST1/00119. J.L.dS., T.K., and E.L. acknowledge the
financial support from FCT --- Funda\c{c}\~{a}o para a Ci{\^e}ncia e a Tecnologia through the project UID/MAT/04674/2013 (CIMA). 
T.K. and E.L. were partially supported by the LMS Joint Research Group 'Mathematical modelling of random multicomponent systems.'

\end{document}